\numberwithin{equation}{section}
\newtheorem{Theorem}{Theorem}[section]
\newcommand{\Z}{\mathbb{Z}}
\newcommand{\N}{\mathbb{N}}
\DeclareMathOperator{\sn}{sn}
\DeclareMathOperator{\cn}{cn}
\DeclareMathOperator{\dn}{dn}
\DeclareMathOperator{\Ec}{Ec}
\DeclareMathOperator{\Es}{Es}
\DeclareMathOperator{\ce}{ce}
\DeclareMathOperator{\se}{se}
\DeclareMathOperator{\arctanh}{arctanh}
\DeclareMathOperator{\arcsn}{arcsn}
\def\ifrac#1#2{\textstyle{{#1}\over{#2}}\displaystyle}
\begin{document}

\allowdisplaybreaks

\renewcommand{\thefootnote}{$\star$}

\newcommand{\arXivNumber}{1507.04984}

\renewcommand{\PaperNumber}{095}

\FirstPageHeading

\ShortArticleName{Asymptotics for the Lam\'{e} and Mathieu Functions}

\ArticleName{Rigorous Asymptotics for the Lam\'{e} and Mathieu\\ Functions and their Respective Eigenvalues\\ with a Large Parameter\footnote{This paper is a~contribution to the Special Issue
on Orthogonal Polynomials, Special Functions and Applications.
The full collection is available at \href{http://www.emis.de/journals/SIGMA/OPSFA2015.html}{http://www.emis.de/journals/SIGMA/OPSFA2015.html}}}

\Author{Karen {OGILVIE} and Adri B.~{OLDE DAALHUIS}}
\AuthorNameForHeading{K.~Ogilvie and A.B.~Olde Daalhuis}
\Address{Maxwell Institute and School of Mathematics, The University of Edinburgh,\\
 Peter Guthrie Tait Road, Edinburgh EH9 3FD, UK}
\Email{\href{mailto:K.Ogilvie@sms.ed.ac.uk}{K.Ogilvie@sms.ed.ac.uk}, \href{mailto:A.OldeDaalhuis@ed.ac.uk}{A.OldeDaalhuis@ed.ac.uk}}

\ArticleDates{Received August 03, 2015, in f\/inal form November 20, 2015; Published online November 24, 2015}

\Abstract{By application of the theory for second-order linear dif\/ferential equations with two turning points developed in~[Olver F.W.J., \textit{Philos. Trans. Roy. Soc. London Ser.~A} \textbf{278} (1975), 137--174],
uniform asymptotic approximations are obtained in the f\/irst part of this paper for the Lam\'e and Mathieu functions
with a large real parameter.
These approximations are expressed in terms of parabolic cylinder functions, and are uniformly valid in their respective
real open intervals. In all cases explicit bounds are supplied for the error terms associated with the approximations.
Approximations are also obtained for the large order behaviour for the respective eigenvalues.
We restrict ourselves to a two term uniform approximation.
Theoretically more terms in these approximations could be computed, but the coef\/f\/icients would be very complicated.
In the second part of this paper we use a simplif\/ied method to obtain uniform asymptotic expansions for these functions.
The coef\/f\/icients are just polynomials and  satisfy simple recurrence relations.
The price to pay is that these asymptotic expansions hold only in a shrinking interval as their respective parameters
become large; this interval however encapsulates all the interesting oscillatory behaviour of the functions.
This simplif\/ied method also gives many terms in asymptotic expansions for these eigenvalues, derived simultaneously
with the coef\/f\/icients in the function expansions. We provide rigorous realistic error bounds
for the function expansions when truncated and order estimates for the error when the eigenvalue expansions are truncated.
With this paper we conf\/irm that many of the formal results in the literature are correct.}

\Keywords{Lam\'{e} functions; Mathieu functions; uniform asymptotic approximations; coalescing turning points}

\Classification{33E10; 34E05; 34E20}

\renewcommand{\thefootnote}{\arabic{footnote}}
\setcounter{footnote}{0}

\section{Introduction}

The Lam\'{e} equation is
\begin{gather}\label{LDiff1}
\frac{d^2w}{dz^2}+\left(h-\nu(\nu+1)k^2\sn^2(z,k)\right)w=0,
\end{gather}
where $h$, $k$ and $\nu$ are real parameters such that $0<k<1$ and $\nu\geq -\frac{1}{2}$, and $\sn(z,k)$ is a Jacobian elliptic function (see \cite[\S~22.2]{NIST:DLMF}).
We consider the interval $z\in(-K,K)$, where $K=K(k)$ is Legendre's complete elliptic integral of the f\/irst kind (see \cite[\S~19.2(ii)]{NIST:DLMF}).
When $h$ assumes the special values $a_{\nu}^{m}$ or $b_{\nu}^{m+1}$ for $m=0,1,\dots $, Lam\'{e}'s equation admits even or odd periodic solutions denoted
$\Ec_{\nu}^{m} (z,k^2 )$ or $\Es_{\nu}^{m+1} (z,k^2 )$ respectively.

Lam\'{e}'s equation f\/irst appeared in a paper by Gabriel Lam\'{e} in 1837~\cite{Lame1837}. It appears in the me\-thod of separation of variables applied to the Laplace equation in elliptic coordinates.
Lam\'{e} functions have applications in antenna research, occur when studying bifurcations in chaotic Hamiltonian systems, and in the theory of Bose--Einstein condensates, to name a few (see \cite[\S~29.19]{NIST:DLMF}).

Mathieu's equation is
\begin{gather}\label{MDiff1}
\frac{d^2w}{dz^2}+\left(\lambda-2h^2\cos2z\right)w=0,
\end{gather}
where $\lambda$ and $h$ are real parameters. We consider the interval $z\in(0,\pi)$. When $\lambda$ assumes the special values $a_{m}$ or $b_{m+1}$ for $m=0,1,2,\dots$,
Mathieu's equation admits even or odd periodic solutions denoted $\ce_{m} (h,z )$ or $\se_{m+1}(h,z)$ respectively.

These functions f\/irst arose in physical applications in 1868 in \'{E}mile Mathieu's study of vib\-rations in an elliptic drum~\cite{Mathieu1868}.
Since they have appeared in problems pertaining to vibrational systems, electrical and thermal dif\/fusion, electromagnetic wave guides, elliptical cylinders in viscous f\/luids,
and dif\/fraction of sound and electromagnetic waves, to name a few. In general, they appear when studying solutions of dif\/ferential equations that are separable in elliptic cylindrical coordinates.
For an insight to see how Mathieu functions appear in physical applications see~\cite{McLachlan1964}.

We wish to obtain uniform asymptotic approximations for the Lam\'{e} and Mathieu functions, and asymptotic expansions for their respective eigenvalues, as the parameters~$\nu$ in
Lam\'{e}'s equation and $h$ in Mathieu's equation become large. We denote for the moment the parameter $h$ in Lam\'{e}'s equation to be $h_{L}$, to avoid
confusion with the parameter~$h$ in Mathieu's equation. We have in the limit $k\to0_{+}$ from \cite[\S~22.5(ii)]{NIST:DLMF} that
\begin{gather*}
\lim_{k\to0_{+}}\sn(z,k)=\sin{z}, \qquad \textrm{and} \qquad \lim_{k\to0_{+}} K(k)=\frac{\pi}{2},
\end{gather*}
thus if $\nu\to\infty$ in such a way that as $k\to0_{+}$, $\sqrt{\nu(\nu+1)}k=2h$ for some constant $h$, then we can rewrite the limit of Lam\'{e}'s equation in the form
\begin{gather*}
\frac{d^2w}{dz^2}+\left(h_{L}-2h^2-2h^2\cos{2\left(\frac{\pi}{2}-z\right)}\right)w=0.
\end{gather*}
Thus for $\nu=-\frac{1}{2}+\sqrt{\frac{1}{4}+\left(\frac{2h}{k}\right)^2}$ we have
\begin{gather}\label{Msoln}
\lim_{k\to0_{+}} \Ec_{\nu}^{m}\left(z,k^2\right)=\ce_{m}\left(h,\frac{\pi}{2}-z\right),\qquad
\lim_{k\to0_{+}}  \Es_{\nu}^{m+1}\left(z,k^2\right)=\se_{m+1}\left(h,\frac{\pi}{2}-z\right),
\end{gather}
and
\begin{align}\label{MEigen1}
a_{m}=\lim_{k\to0_{+}}a_{\nu}^{m}-2h^2,\qquad
b_{m+1}=\lim_{k\to0_{+}}b_{\nu}^{m+1}-2h^2.
\end{align}
With this in mind, we derive rigorous results for the Lam\'{e} functions and their eigenvalues, and deduce analogous results for Mathieu's equation using this limiting relation.

For a general overview of the Lam\'{e} and Mathieu equations, see \cite{Arscott64,HTF3,WW1927}. For a more detailed study of Mathieu's equation see also \cite{MS1954}.
Whilst the results for asymptotic expansions of the Lam\'{e} functions and their respective eigenvalues for parameter~$\nu$ large are not so abundant, analogous problems for the Mathieu functions and their eigenvalues for parameter $h$ large have been studied extensively. The main results in the Lam\'{e} case can be found in \cite{Ince1940b,Ince1940a, Muller1966a, Muller1966b,Muller1966c}. These results are all formal, meaning they are not accompanied with any error analysis. None of the results about the functions have been published in~\cite[\S~29.7]{NIST:DLMF}, and only limited formal results about the corresponding eigenvalues can be found there. In~\cite[\S~29.7(ii)]{NIST:DLMF} it is stated that one could derive from the results of~\cite{Weinstein1985} asymptotic approximations for the Lam\'{e} functions. However in that paper the results are given without much justif\/ication and the error bounds given for the approximations do not make sense in the intervals where the approximant is exponentially small. Here the results we give for the Lam\'{e} functions are new; we give a two term uniform asymptotic approximation for the Lam\'{e} functions in terms of parabolic cylinder functions for~$\nu$ large complete with error bounds, and we show this holds uniformly in the interval $z\in [0,K]$. We also make a rigorous statement about the corresponding eigenvalues.
We also give asymptotic expansions for the functions and eigenvalues in a shrinking neighbourhood of the origin, which correspond with the few formal results in the literature.

The main results in the Mathieu case can be found in \cite{Dingle1962, Dunster1994,Goldstein1927, Goldstein1929,Ince1927a, Ince1927b, Ince1928, Kurz1979, MS1954,Sips1949, Sips1959, Sips1965,Weinstein1985}. In~\cite{MS1954,Weinstein1985}, error estimates are written down for a one term asymptotic approximation of the Mathieu functions but these are given without any justif\/ication, and the error does not make sense in the intervals where the approximant is exponentially small. In~\cite{Kurz1979}, similar results are given for the functions
with similar issues for the error estimates; the results about the eigenvalues however seem reasonable, but methods of obtaining terms in the eigenvalue expansions seem cumbersome.
The most satisfactory work thus far is contained in~\cite{Dunster1994}. Here Dunster derives uniform asymptotic approximations for all complex values $z$ when $-2h^2\leq \lambda \leq (2-d)h^2$, $d>0$, with error bounds either included or available for all approximations. These approximations involve both elementary functions and Whittaker functions. He also includes rigorous statements related to the eigenvalues~$a_{m}$ and~$b_{m+1}$. The remaining papers stated include only formal results, without any satisfactory error analysis. Here we consider only the real interval $z\in(0,\pi)$ as many physical applications are restricted to real variables. The results we give are uniform asymptotic approximations complete with error bounds in the interval $z\in\big[0,\frac{\pi}{2}\big]$, given in the most natural form for the case we consider. Since we restrict ourselves to real variable analysis we can make stronger statements about the error bounds for the functions and their respective eigenvalues than given in~\cite{Dunster1994}.
We also give asymptotic expansions for the functions and eigenvalues in a~shrinking neighbourhood of~$\frac{\pi}{2}$, which correspond with the formal results in the literature.

\section{Overview}

This paper is written in two parts: In Part \ref{partone} we derive two term uniform asymptotic approximations for the Lam\'{e} functions $\Ec_{\nu}^{m}\left(z,k^2\right)$ and $\Es_{\nu}^{m+1}\left(z,k^2\right)$ which hold for $z\in[0,K]$, and rigorous approximations for the large order behaviour of their respective eigenvalues~$a_{\nu}^{m}$ and~$b_{\nu}^{m+1}$, $m=0,1,\dots $, as $\kappa\to\infty$, where $\kappa=\sqrt{\nu(\nu+1)}k$. Treating the Mathieu functions and their respective eigenvalues as a special case of those in the Lam\'{e} case, we obtain simply uniform asymptotic approximations for the Mathieu functions $\ce_{m}(h,z)$ and $\se_{m+1}(h,z)$ which hold for $z\in\big[0,\frac{\pi}{2}\big]$, and rigorous approximations for the large order behaviour of their respective eigenvalues $a_{m}$ and $b_{m+1}$, as $h\to\infty$.

Part~\ref{parttwo} uses a simplif\/ied method to derive asymptotic expansions for both the Lam\'{e} and Mathieu functions
and their respective eigenvalues. We can compute as many terms as we like in these expansions.
The price to pay is that these function expansions only hold for
$z=\mathcal{O}(\kappa^{-1/2})$ and $z=\frac{\pi}{2}+\mathcal{O}(h^{-1/2})$, as $\kappa\to\infty$ and $h\to\infty$ respectively. These intervals at least encapsulate all of the interesting oscillatory behaviour of the functions. We give rigorous and realistic error bounds for the function expansions once truncated, along with order estimates for the error when the eigenvalue expansions are truncated.

First, we will summarise the relevant properties of the Lam\'{e} and Mathieu functions and their respective eigenvalues in the upcoming section.

\section{Properties of Lam\'{e} and Mathieu functions}
\subsection*{Lam\'{e} functions}

We will summarise their important properties here, for a full treatment see~\cite[\S~29]{NIST:DLMF}. These functions are either $2K$-periodic or $2K$-antiperiodic, depending on the parity of~$m$. The functions have exactly~$m$ zeros in the interval $(-K,K)$ and their eigenvalues are ordered such that
\begin{gather*}
a_{\nu}^{0}<a_{\nu}^{1}<a_{\nu}^{2}<\cdots ,\qquad b_{\nu}^{1}<b_{\nu}^{2}<b_{\nu}^{3}<\cdots ,
\end{gather*}
and interlace such that
\begin{gather*}
a_{\nu}^{m}<b_{\nu}^{m+1},\qquad
b_{\nu}^{m}<a_{\nu}^{m+1}.
\end{gather*}
The eigenvalues coalesce such that
\begin{gather*}
a_{\nu}^{m}=b_{\nu}^{m}, \qquad \textrm{when} \quad \nu=0,1,\dots,m-1.
\end{gather*}
Since the Jacobian elliptic function $\dn(z,k)$ (see \cite[\S~22]{NIST:DLMF}) is even, we can rewrite the normalisations given in \cite[\S~29.3]{NIST:DLMF} as
\begin{gather}\label{LNorm}
\int_{-K}^{K}\dn(z,k)\left\{\Ec_{\nu}^{m}\left(z,k^2\right)\right\}^2dz=\int_{-K}^{K}\dn(z,k)\left\{\Es_{\nu}^{m+1}\left(z,k^2\right)\right\}^2dz=\frac{\pi}{2},
\end{gather}
To complete their def\/initions we have
\begin{gather} \label{Lsigns}
\Ec_{\nu}^{m}\left(K,k^2\right) >0,\qquad {\rm and} \qquad
\frac{d\Es_{\nu}^{m}\left(z,k^2\right)}{dz}\bigg|_{z=K} <0.
\end{gather}
They satisfy the orthogonality conditions for $m \neq n$, $(n=0,1,\dots )$
\begin{gather*}
\int_{-K}^{K}\Ec_{\nu}^{m}\left(z,k^2\right)\Ec_{\nu}^{n}\left(z,k^2\right)dz=0,\qquad {\rm and} \qquad
\int_{-K}^{K}\Es_{\nu}^{m+1}\left(z,k^2\right)\Es_{\nu}^{n+1}\left(z,k^2\right)dz=0.
\end{gather*}
We summarise their properties and give boundary conditions in Table \ref{TableA}.

\begin{table}[h!]\centering
\caption{Properties and boundary conditions for Lam\'{e} functions.}\label{TableA}
\vspace{1mm}

\begin{tabular}{c c c c c c}
\hline
 eigenfunctions &  eigenvalues&periodicity& $\begin{array}{@{}c@{}} \text{parity} \\ \text{at $z=K$}\end{array}$
 & $\begin{array}{@{}c@{}} \text{parity} \\ \text{at $z=0$}\end{array}$ & boundary conditions\\
\hline
$\Ec_{\nu}^{2m}\left(z,k^2\right)$&$a_{\nu}^{2m}$&period $2K$ & even & even &$w'(0)=w'(K)=0$ \tsep{2pt}\bsep{1pt}\\
$\Ec_{\nu}^{2m+1}\left(z,k^2\right)$&$a_{\nu}^{2m+1}$&antiperiod $2K$ &even & odd& $w(0)=w'(K)=0$ \bsep{1pt}\\
$\Es_{\nu}^{2m+1}\left(z,k^2\right)$&$b_{\nu}^{2m+1}$&antiperiod $2K$ & odd & even&$w'(0)=w(K)=0$ \bsep{1pt}\\
$\Es_{\nu}^{2m+2}\left(z,k^2\right)$&$b_{\nu}^{2m+2}$&period $2K$  &odd & odd &$w(0)=w(K)=0$\bsep{1pt}\\
\hline
\end{tabular}
\end{table}

\subsection*{Mathieu functions}

We will summarise their important properties here, for a full treatment see \cite[\S~28]{NIST:DLMF}. These are either $\pi$-periodic or $\pi$-antiperiodic, depending on the parity of $m$. Both functions have $m$ zeros in the interval $(0,\pi)$ and their eigenvalues are ordered such that
\begin{gather*}
a_{0}<a_{1}<\cdots \to \infty,\qquad
b_{1}<b_{2}<\cdots \to \infty,
\end{gather*}
and interlace such that
\begin{gather*}
a_{0}<b_{1}<a_{1}<b_{2}<a_{2}<\cdots .
\end{gather*}
The normalisations in \cite[\S~28.2]{NIST:DLMF} can be rewritten as
\begin{gather*}
\int_{0}^{\pi}\left\{\ce_{m}(h,z)\right\}^2dz=\int_{0}^{\pi}\left\{\se_{m+1}(h,z)\right\}^2dz=\frac{\pi}{2},
\end{gather*}
and to complete their def\/initions, the signs are determined by continuity from
\begin{gather*}
\ce_{0}(0,z)=\frac{1}{\sqrt{2}}, \qquad \ce_{m}(0,z)=\cos{mz}, \qquad \se_{m}(0,z)=\sin{mz}.
\end{gather*}
We summarise their properties and give boundary conditions in Table~\ref{TableB}.

\begin{table}[h]\centering
\caption{Properties and boundary conditions for Mathieu functions.}\label{TableB}
\vspace{1mm}

\begin{tabular}{c c c c c c}
\hline
 eigenfunctions &  eigenvalues&periodicity &$\begin{array}{@{}c@{}} \text{parity  of} \\ \text{$w(h,z)$}\end{array}$
   & $\begin{array}{@{}c@{}} \text{parity  of} \\ \text{$w\left(h,z+\frac{\pi}{2}\right)$}\end{array}$ & boundary conditions\\
\hline
$\ce_{2m}(h,z)$&$a_{2m}$&period $\pi$ & even& even& $w'(0)=w'\left(\pi/2\right)=0$ \tsep{2pt}\bsep{1pt}\\
$\ce_{2m+1}(h,z)$&$a_{2m+1}$&antiperiod $\pi$ &even& odd&$w'(0)=w\left(\pi/2\right)=0$ \bsep{1pt}\\
$\se_{2m+1}(h,z)$&$b_{2m+1}$&antiperiod $\pi$ &odd& even&$w(0)=w'\left(\pi/2\right)=0$ \bsep{1pt}\\
$\se_{2m+2}(h,z)$&$b_{2m+2}$&period $\pi$  &odd&  odd&$w(0)=w\left(\pi/2\right)=0$\bsep{1pt}\\
\hline
\end{tabular}
\end{table}

\part{Uniform asymptotic approximations}\label{partone}
We wish to obtain uniform approximations for the Lam\'{e} functions $\Ec_{\nu}^{m}\left(z,k^2\right)$ and $\Es_{\nu}^{m+1}\left(z,k^2\right)$, and rigorous approximations for their respective eigenvalues $a_{\nu}^{m}$ and $b_{\nu}^{m+1}$, for $m=0,1,\dots $.
In~\cite[\S~29.7]{NIST:DLMF}, the f\/irst few terms are given for formal asymptotic expansions of the eigenvalues~$a_{\nu}^{m}$ and~$b_{\nu}^{m+1}$ as $\nu\to\infty$.
These indicate that~(\ref{LDiff1}) will have two turning points, one either side of the origin, which coalesce symmetrically into the origin of the $z$-plane as $\nu\to\infty$.
In Section~\ref{CaseA} we will use this fact and derive uniform approximations for solutions in terms of the parabolic cylinder functions $U\big({-}\frac{1}{2}\kappa\sigma^2,\sqrt{2\kappa}\zeta\big)$
and $\overline{U}\big({-}\frac{1}{2}\kappa\sigma^2,\sqrt{2\kappa}\zeta\big)$ as $\nu\to\infty$, where $\kappa=\sqrt{\nu(\nu+1)}k$ and $\zeta$ arises from a special complicated transformation of the
variable~$z$, and~$\sigma$ is related to the eigenvalue parameter~$h$.
Only when $-\frac{1}{2}\kappa\sigma^2$ is exactly a negative half integer does this function decay exponentially on both the positive and negative real axis when the variable is large (see \cite[\S~12.9]{NIST:DLMF}).
Respective of this, in Section~\ref{interlude} we get a rigorous approximation for the eigenvalues~$a_{\nu}^{m}$ and~$b_{\nu}^{m+1}$ and use this in Section~\ref{CaseB} to give an approximation in
terms of the special parabolic cylinder functions $D_{m}\big(\sqrt{2\kappa}\zeta\big)$, where $D_{m}(z)=U\left(-m-\frac{1}{2},z\right)$, and these decay exponentially for both large positive and large negative
values of the variable, and have exactly~$m$ zeros in their oscillatory interval (again see \cite[\S~12.2]{NIST:DLMF}). Finally in Section~\ref{Mathieu} we use these results for the Lam\'{e} equation to obtain in
a limiting form uniform approximations for the Mathieu functions $\ce_{m}(h,z)$ and $\se_{m+1}(h,z)$, and rigorous results for their eigenvalues~$a_{m}$ and~$b_{m+1}$, as $h\to\infty$.

\section{Uniform approximations for the Lam\'{e} functions}\label{CaseA}

The periodic coef\/f\/icient in (\ref{LDiff1}) is troublesome, thus we transform the independent variable to obtain an algebraic equation, and then transform the dependent variable to remove the subsequent term in the f\/irst derivative; this is done by letting
\begin{gather}\label{LTran}
x=\sn(z,k), \qquad w\left(z,k^2\right)=\frac{1}{\left(\left(1-x^2\right)\left(1-k^2x^2\right)\right)^{1/4}}\widetilde{w}\left(x,k^2\right),
\end{gather}
and denoting $\kappa=\sqrt{\nu(\nu+1)}k$, we obtain Lam\'{e}'s equation in the form
\begin{gather}\label{LDiff2}
\frac{d^2\widetilde{w}\left(x,k^2\right)}{dx^2}=\left(\kappa^2\frac{x^2-s^2}{\left(1-x^2\right)\left(1-k^2x^2\right)}
+\phi\left(x,k^2\right)\right)\widetilde{w}\left(x,k^2\right),
\end{gather}
where
\begin{gather}\label{LEigen1}
s^2=\frac{h}{\kappa^2}
\end{gather}
and
\begin{gather*}
\phi\left(x,k^2\right)=-\frac{2k^2\left(k^2+1\right)x^4+\left(k^4-10k^2+1\right)x^2+2\left(1+k^2\right)}{4\left(1-x^2\right)^2\left(1-k^2x^2\right)^2}.
\end{gather*}
We correspondingly consider the interval $x\in (-1,1 )$, where $x=-1,0,1$ corresponds to $z=-K,0,K$. We now write $\kappa\to\infty$ to correspond to $\nu\to\infty$.

From formal asymptotic expansions given in \cite[\S~29.7]{NIST:DLMF} we deduce that $s\to0$ as $\kappa\to\infty$, hence in this limit~(\ref{LDiff2}) has two coalescing turning points.
The turning points of our equation are at $x=\pm s$ and
\begin{gather*}
\frac{x^2-s^2}{\left(1-x^2\right)\left(1-k^2x^2\right)}<0, \qquad -s<x<s,
\end{gather*}
thus we apply the theory of Case~I in~\cite{Olver1975}. In this case uniform asymptotic approximations are in terms of the parabolic cylinder functions
$U\big({-}\frac{1}{2}\kappa\sigma^2,\sqrt{2\kappa}\zeta\big)$ and $\overline{U}\big({-}\frac{1}{2}\kappa\sigma^2,\sqrt{2\kappa}\zeta\big)$. For the standard notation see~\cite[\S~12.2]{NIST:DLMF}.

Following Olver \cite{Olver1975}, new variables relating $\left\{x,\widetilde{w}\right\}$ to $\left\{\zeta,W\right\}$ are introduced by the appropriate Liouville transformation given by
\begin{gather}\label{LLiouv}
W\left(\zeta,k^2\right)=\dot{x}^{-\tfrac{1}{2}}\widetilde{w}\left(x,k^2\right), \qquad \dot{x}^{2}\frac{x^{2}-s^2}{\left(1-x^{2}\right)\left(1-k^2x^2\right)} = \zeta^{2}-\sigma^2
\end{gather}
the dot signifying dif\/ferentiation with respect to $\zeta$, where $\sigma$ is def\/ined by
\begin{gather}\label{LEigen2}
\int_{-s}^{s}\sqrt{\frac{s^2-t^{2}}{\left(1-t^{2}\right)\left(1-k^2t^2\right)}} dt  =
\int_{-\sigma}^{\sigma}\sqrt {\sigma^2-\tau^{2}}d\tau=\frac{\pi}{2}\sigma ^2.
\end{gather}
From this we denote that
\begin{gather*}
0<s<1 \quad \textrm{corresponds to} \quad 0<\sigma<\sigma_{*}, \qquad \textrm{where} \quad \sigma_{*}=2\sqrt{\frac{\arcsin(k)}{\pi k}}.
\end{gather*}
Since $\zeta=\pm \sigma$ corresponds to $x=\pm s$, integration of the second of~(\ref{LLiouv}) yields
\begin{alignat}{3} \nonumber
& \int_{x}^{-s}\sqrt{
\frac{t^{2}-s^2}{\left(1-t^{2}\right)\left(1-k^2t^2\right)}}dt  =\int_{\zeta}^{-\sigma}\sqrt{\tau^{2}-\sigma^2}d\tau, \qquad && -1<x \leq -s,& \\
\nonumber
 & \int_{-s}^{x}\sqrt
{\frac{s^2-t^{2}}{\left(1-t^{2}\right)\left(1-k^2t^2\right)}}dt  = \int_{-\sigma}^{\zeta}\sqrt{\sigma^2-\tau^{2}} d\tau, \qquad && -s\leq x \leq s,& \\ \label{LInts}
& \int_{s}^{x}\sqrt
{\frac{t^{2}-s^2}{\left(1-t^{2}\right)\left(1-k^2t^2\right)}} dt  = \int_{\sigma}^{\zeta}\sqrt{\tau^{2}-\sigma^2}d\tau, \qquad && s \leq x < 1 .&
\end{alignat}
These equations def\/ine $\zeta$ as a real analytic function of~$x$. There is a one-to-one correspondence between the variables~$x$ and~$\zeta$, where~$\zeta$ is an increasing function of $x$, and we denote $\zeta=-\zeta_{*},0$, $\zeta_{*}$ to correspond to $x=-1,0,1$. It follows that $x(\zeta,\sigma)$ is analytic both in $\zeta$ and $\sigma$ for $\zeta\in(-\zeta_{*},\zeta_{*})$ and $\sigma\in(-\sigma_{*},\sigma_{*})$. Also $\dot{x}$ is non-zero in these intervals.

Performing the substitution $t=s\tau$ in~(\ref{LEigen2}) we expand the integral and obtain
\begin{gather*}
\sigma^2=s^2+\frac{1+k^2}{8}s^4+\frac{3+2k^2+3k^4}{64}s^6+\mathcal{O}\left(s^8\right), \qquad s\to 0,
\end{gather*}
and by reversion
\begin{gather}\label{LEigen4}
s^2=\sigma^2-\frac{1+k^2}{8}\sigma^4-\frac{\left(1-k^2\right)^2}{64}\sigma^6+\mathcal{O}\left(\sigma^8\right), \qquad \sigma\to 0.
\end{gather}
In the critical case $s=\sigma=0$ we have from the third of~(\ref{LInts})
\begin{gather*}
\frac{\arctanh(k)}{k}=\int_{0}^{1}\frac{t}{\sqrt{(1-t^2)(1-k^2t^2)}}dt =\int_{0}^{\zeta_{*}}\tau d\tau=\ifrac{1}{2}\zeta_{*}^2,
\end{gather*}
which gives
\begin{gather*}
\zeta_{*}=\sqrt{\frac{2\arctanh(k)}{k}}.
\end{gather*}
Thus we deduce that as $s,\sigma\to0$
\begin{gather*}
\zeta_{*}\to\sqrt{\frac{2\arctanh(k)}{k}}.
\end{gather*}
The transformed dif\/ferential equation is now of the form
\begin{gather}\label{LDiff2.5}
\frac{d^{2}W\left(\zeta,k^2\right)}{d\zeta^{2}} =
\left(\kappa^2\left(\zeta^{2}-\sigma^2\right)+{\psi}\left(\zeta,k^2\right)\right)W\left(\zeta,k^2\right),
\end{gather}
where
\begin{gather*}
{\psi}\left(\zeta,k^2\right) =\dot{x}^2\phi\left(x,k^2\right)+\dot{x}^{1/2}\frac{d^2}{d\zeta^2}\big(\dot{x}^{-1/2}\big)\\
\hphantom{{\psi}\left(\zeta,k^2\right)}{}
 =\frac{2\sigma^2+3\zeta^2}{4\left(\sigma^2-\zeta^2\right)^2}+\frac{\sigma^2-\zeta^2}{4}\left(-k^2+\frac{1+k^2\left(1-3s^2\right)}{s^2-x^2} \right. \\
 \left. \hphantom{{\psi}\left(\zeta,k^2\right)=}{}
 +\frac{3\left(1-2\left(1+k^2\right)s^2+3k^2s^4\right)}{\left(s^2-x^2\right)^2}+\frac{5s^2\left(s^2-1\right)\left(1-k^2s^2\right)}{\left(s^2-x^2\right)^3}\right).
\end{gather*}
By construction, the apparent singularities in the above function at $\zeta = \pm \sigma$, corresponding to $x=\pm s$, cancel each other out so that ${\psi}\left(\zeta,k^2\right)$ is well-behaved there. To check this one could expand ${\psi}\left(\zeta,k^2\right)$ around this point. One could also note that in (\ref{LDiff2}), $\phi\left(x,k^2\right)$ has singularities at $x=\pm1$, where as ${\psi}\left(\zeta,k^2\right)$ does not blow up there. However $\zeta$ and therefore $\psi\left(x,k^2\right)$ will have branch point singularities there.

For our advantage in the error analysis, we write our dif\/ferential equation in the form
\begin{gather}\label{LDiff3}
\frac{d^{2}W\left(\zeta,k^2\right)}{d\zeta^{2}} =
\big(\kappa^{2}\left(\zeta^{2}-\widetilde{\sigma}^2\right)+{\widetilde{\psi}}\left(\zeta,k^2\right)\big)W\left(\zeta,k^2\right),
\end{gather}
where
\begin{gather*}
\widetilde{\psi}\left(\zeta,k^2\right)={\psi}\left(\zeta,k^2\right)-{\psi}\left(0,k^2\right)={\psi}\left(\zeta,k^2\right)+\frac{\sigma^4-s^4}{2\sigma^2 s^4},
\end{gather*}
and correspondingly
\begin{gather}\label{LEigen44}
\widetilde{\sigma}^2={\sigma}^2+\frac{\sigma^4-s^4}{2\kappa^2\sigma^2 s^4},
\end{gather}
where
\begin{gather}\label{LLEigen}
\frac{\sigma^4-s^4}{2\sigma^2 s^4}=\frac{1+k^2}{8}+\mathcal{O}\left(s^2\right), \qquad s\to 0.
\end{gather}
This gives
\begin{gather}\label{psi0}
\widetilde{\psi}\left(0,k^2\right)=0.
\end{gather}
On inspection it follows that since $s$, $\sigma$ and the variables $x$ and $\zeta$ are all bounded as $\kappa\to\infty$, and the apparent singularities at $x=\pm s$ and $\zeta = \pm \sigma$ cancel each other, we have for $\zeta\in[-\zeta_{*},\zeta_{*}]$
\begin{gather*}
\widetilde{\psi}\left(\zeta,k^2\right)=\mathcal{O}(1),
\end{gather*}
uniformly in this limit.
On applying Theorem~I of \cite[\S~6]{Olver1975} with $u=\kappa$, $\alpha=\widetilde{\sigma}$ and $\zeta_{2}=\zeta_{*}$, we obtain the following solutions of~(\ref{LDiff3})
\begin{gather}
W_{1}\left(\zeta,k^2\right) =U\left(-\ifrac{1}{2}\kappa\widetilde{\sigma}^2,\sqrt{2\kappa}\zeta\right)+\epsilon_{1}\left(\zeta,k^2\right),\nonumber
\\
W_{2}\left(\zeta,k^2\right) =\overline{U}\left(-\ifrac{1}{2}\kappa\widetilde{\sigma}^2,\sqrt{2\kappa}\zeta\right)+\epsilon_{2}\left(\zeta,k^2\right),
\label{LSoln1}
\end{gather}
valid when $\zeta\in[0,\zeta_{*})$. Choosing as a consequence of (\ref{psi0})
\begin{gather*}
\Omega(z)=z,
\end{gather*}
and thus def\/ining the variational operator as
\begin{gather*}
\mathcal{V}_{a,b}\big(\widetilde{\psi}\big) = \int_{a}^{b}\frac{\big|\widetilde{\psi}\left(t,k^2\right)\big|}{\sqrt{2\kappa}t}dt,
\end{gather*}
the bounds obtained are
\begin{gather*}
\left|\epsilon_{1}\left(\zeta,k^2\right)\right|  \leq \frac{\textbf{M}\left(-\ifrac{1}{2}\kappa\widetilde{\sigma}^2,\sqrt{2\kappa}\zeta\right)}{\textbf{E}
\left(-\ifrac{1}{2}\kappa\widetilde{\sigma}^2,\sqrt{2\kappa}\zeta\right)}
\left[\exp\left(\ifrac{1}{2}\sqrt{\pi/\kappa} l_{1}\left(-\ifrac{1}{2}\kappa\widetilde{\sigma}^2\right)\mathcal{V}_{\zeta,\zeta_{*}}\big(\widetilde{\psi}\big)\right)-1\right],\\
\frac{\left|\epsilon_{2}\left(\zeta,k^2\right)\right|}{\textbf{E}\left(-\ifrac{1}{2}\kappa\widetilde{\sigma}^2,\sqrt{2\kappa}\zeta\right)}  \leq \textbf{M}\left(-\ifrac{1}{2}\kappa\widetilde{\sigma}^2,\sqrt{2\kappa}\zeta
\right)\left[\exp\left(\ifrac{1}{2}\sqrt{\pi/\kappa} l_{1}\left(-\ifrac{1}{2}\kappa\widetilde{\sigma}^2\right)\mathcal{V}_{0,\zeta}\big(\widetilde{\psi}\big)\right)-1\right],
\end{gather*}
where
\begin{gather*}
l_{1}\left(-\ifrac{1}{2}\kappa\widetilde{\sigma}^2\right)=\sup_{z\in(0,\infty)}
\left\{\frac{\Omega(z)\textbf{M}^2\left(-\frac{1}{2}\kappa\widetilde{\sigma}^2,z\right)}
{\Gamma\left(\frac{1}{2}\left(1+\kappa\widetilde{\sigma}^2\right)\right)}\right\}.
\end{gather*}
The functions $\textbf{E}$, $\textbf{M}$ and later $\textbf{N}$ are def\/ined in  \cite[\S~5.8]{Olver1975}.
It follows from~(\ref{psi0}) and the evenness of $\widetilde{\psi}\left(\zeta,k^2\right)$ that
\begin{gather*}
\mathcal{V}_{0,\zeta}\big(\widetilde{\psi}\big)=\mathcal{O}\big(\kappa^{-1/2}\big) \qquad \textrm{and} \qquad  \mathcal{V}_{\zeta,\zeta_{*}}\big(\widetilde{\psi}\big)=\mathcal{O}\big(\kappa^{-1/2}\big) \qquad \textrm{as} \quad \kappa\to\infty.
\end{gather*}
From  \cite[\S~5.8]{Olver1975} we have
\begin{gather}\label{Olvref}
\textbf{M}^2\left(-\ifrac{1}{2}\kappa\widetilde{\sigma}^2,z\right)=\mathcal{O}\left(z^{-1}\right)\qquad \textrm{as} \quad z\to\infty,
\end{gather}
thus clearly $l_{1}\left(-\frac{1}{2}\kappa\widetilde{\sigma}^2\right)$ is bounded as $\kappa\to\infty$.
Hence
\begin{gather}\nonumber
\epsilon_{1}\left(\zeta,k^2\right)  = \frac{\textbf{M}\left(-\ifrac{1}{2}\kappa\widetilde{\sigma}^2,\sqrt{2\kappa}\zeta\right)}{\textbf{E}\left(-\ifrac{1}{2}\kappa\widetilde{\sigma}^2,\sqrt{2\kappa}\zeta\right)}\mathcal{O}\left(\kappa^{-1}\right),\\ \label{errors1}
\epsilon_{2}\left(\zeta,k^2\right)  = \textbf{E}\left(-\ifrac{1}{2}\kappa\widetilde{\sigma}^2,\sqrt{2\kappa}\zeta
\right)\textbf{M}\left(-\ifrac{1}{2}\kappa\widetilde{\sigma}^2,\sqrt{2\kappa}\zeta
\right)\mathcal{O}\left(\kappa^{-1}\right).
\end{gather}
Applying similar analysis to the above, again from Theorem~1 in~\cite[\S~6]{Olver1975}, we obtain
\begin{gather}\nonumber
\frac{\partial \epsilon_{1}\left(\zeta,k^2\right)}{\partial\zeta}  = \frac{\textbf{N}\left(-\ifrac{1}{2}\kappa\widetilde{\sigma}^2,\sqrt{2\kappa}\zeta\right)}{\textbf{E}\left(-\ifrac{1}{2}\kappa\widetilde{\sigma}^2,\sqrt{2\kappa}\zeta
\right)}\mathcal{O}\big(\kappa^{-1/2}\big),\\ \label{errors2}
\frac{\partial \epsilon_{2}\left(\zeta,k^2\right)}{\partial\zeta}  = \textbf{E}\left(-\ifrac{1}{2}\kappa\widetilde{\sigma}^2,\sqrt{2\kappa}\zeta
\right)\textbf{N}\left(-\ifrac{1}{2}\kappa\widetilde{\sigma}^2,\sqrt{2\kappa}\zeta\right)\mathcal{O}\big(\kappa^{-1/2}\big).
\end{gather}
We can extend this analysis to include the point $\zeta_{*}$ since $\zeta$ and $\psi\left(\zeta,k^2\right)$ are bounded there.

\section{An interlude: eigenvalues}\label{interlude}

Thus we obtain from (\ref{LSoln1}), (\ref{LLiouv}) and (\ref{LTran}) that for $z\in[0,K]$
\begin{gather*}
\Ec_{\nu}^{m}\left(z,k^2\right) = C_{\nu}^{m}\left(\frac{\zeta^2-\left(\sigma_{\nu}^{m}\right)^2}{x^2-\left(s_{\nu}^{m}\right)^2}\right)^{1/4}\left(W_{\nu,1}^{m}
\left(\zeta,k^2\right)+\eta_{\nu,c}^{m}W_{\nu,2}^{m}\left(\zeta,k^2\right)\right),\\
\Es_{\nu}^{m+1}\left(z,k^2\right) = S_{\nu}^{m+1}\left(\frac{\zeta^2-\left(\sigma_{\nu}^{m}\right)^2}{x^2-\left(s_{\nu}^{m}\right)^2}\right)^{1/4}\left(W_{\nu,1}^{m}
\left(\zeta,k^2\right)+\eta_{\nu,s}^{m+1}W_{\nu,2}^{m}\left(\zeta,k^2\right)\right),
\end{gather*}
where $C_{\nu}^{m}$, $S_{\nu}^{m+1}$, $\eta_{\nu,c}^{m}$ and $\eta_{\nu,s}^{m+1}$ are constants, and $s_{\nu}^{m}$, $\sigma_{\nu}^{m}$, $W_{\nu,1}^{m}$ and $W_{\nu,2}^{m}$ are def\/ined with respect to either $h=a_{\nu}^{m}$ or $h=b_{\nu}^{m+1}$ in the previous section.

Let's f\/irst consider $m$ odd. In correspondence with the boundary conditions we require
\begin{gather}\label{boundcond}
\Ec_{\nu}^{m}\left(0,k^2\right)  =\frac{d \Ec_{\nu}^{m}\left(z,k^2\right)}{dz}\bigg|_{z=K}=0, \\ \label{boundcond2}
\Es_{\nu}^{m+1}\left(0,k^2\right)  =\Es_{\nu}^{m+1}\left(K,k^2\right)=0.
\end{gather}
The requirement at $z=K$ in (\ref{boundcond}) gives
\begin{gather*}
  \ifrac12\zeta_{*}W_{\nu,1}^{m}\left(\zeta_{*},k^2\right)+ \big(\zeta_{*}^2-\left(\sigma_{\nu}^{m}\right)^2\big)\frac{ d W_{\nu,1}^{m}\left(\zeta,k^2\right)}{d\zeta}\bigg|_{\zeta=\zeta_{*}} \\
 \qquad{}
+\eta_{\nu,c}^{m}\left( \ifrac12\zeta_{*}W_{\nu,2}^{m}\left(\zeta_{*},k^2\right)+ \big(\zeta_{*}^2-\left(\sigma_{\nu}^{m}\right)^2\big)\frac{d W_{\nu,2}^{m}\left(\zeta,k^2\right)}{d\zeta}\bigg|_{\zeta=\zeta_{*}}\right)=0,
\end{gather*}
thus from (\ref{errors2}), (\ref{errors1}) and (\ref{LSoln1}), and since as $\kappa\to\infty$ (see \cite[\S~12.9]{NIST:DLMF})
\begin{gather*}
U\left(-\ifrac{1}{2}\kappa\widetilde{\sigma}^2,\sqrt{2\kappa}\zeta_{*}\right)\sim e^{-\frac{\kappa\zeta_{*}^2}{2}}\left(\sqrt{2\kappa}\zeta_{*}\right)^{\ifrac{1}{2}\left(\kappa\widetilde{\sigma}^2-1\right)},\\  \nonumber
U'\left(-\ifrac{1}{2}\kappa\widetilde{\sigma}^2,\sqrt{2\kappa}\zeta_{*}\right)\sim-\sqrt{\frac{\kappa}{2}}\zeta_{*}U\left(-\ifrac{1}{2}\kappa\widetilde{\sigma}^2,\sqrt{2\kappa}\zeta_{*}\right), \\ \nonumber
\overline{U}\left(-\ifrac{1}{2}\kappa\widetilde{\sigma}^2,\sqrt{2\kappa}\zeta_{*}\right)\sim\sqrt{\frac{2}{\pi}}
\Gamma\left(\ifrac{1}{2}\left(1+\kappa\widetilde{\sigma}^2\right)\right)e^{\frac{\kappa\zeta_{*}^2}{2}}\left(\sqrt{2\kappa}
\zeta_{*}\right)^{-\ifrac{1}{2}\left(\kappa\widetilde{\sigma}^2+1\right)}, \\
\overline{U}'\left(-\ifrac{1}{2}\kappa\widetilde{\sigma}^2,\sqrt{2\kappa}\zeta_{*}\right)\sim\sqrt{\frac{\kappa}{2}}
\zeta_{*}\overline{U}\left(-\ifrac{1}{2}\kappa\widetilde{\sigma}^2,\sqrt{2\kappa}\zeta_{*}\right),
\end{gather*}
necessarily $\eta_{\nu,c}^{m}$ is exponentially small in this limit. Note that here we have assumed that $\kappa\widetilde{\sigma}^2=\mathcal{O}(1)$ as $\kappa\to\infty$,
and we show later, in~(\ref{LEigen5}), that this assumption is justif\/ied.

Similarly the requirement at $z=K$ in (\ref{boundcond2}) gives
\begin{gather*}
W_{\nu,1}^{m}\left(\zeta_{*},k^2\right)+\eta_{\nu,s}^{m+1}W_{\nu,2}^{m}\left(\zeta_{*},k^2\right)=0,
\end{gather*}
then necessarily $\eta_{\nu,s}^{m+1}$ is exponentially small as~$\kappa\to\infty$.

Hence for both these cases by considering the requirements at $z=0$ we have the condition
\begin{gather*}
U\left(-\ifrac{1}{2}\kappa\left(\widetilde{\sigma}_{\nu}^{m}\right)^2,0\right) +\mathcal{O}\left(\kappa^{-1}\right) =\frac{2^{\left(\kappa\left(\widetilde{\sigma}_{\nu}^{m}\right)^2-1\right)/4}\sqrt{\pi}}
{\Gamma\left(\frac{1}{4}\left(3-\kappa\left(\widetilde{\sigma}_{\nu}^{m}\right)^2\right)\right)}+\mathcal{O}\left(\kappa^{-1}\right)=0.
\end{gather*}
and to satisfy this we require
\begin{gather*}
\ifrac{1}{2}\kappa\left(\widetilde{\sigma}_{\nu}^{m}\right)^2=j+\ifrac{1}{2}+\mathcal{O}\left(\kappa^{-1}\right),
\end{gather*}
where $j$ is an odd integer.

Let's now consider $m$ even. In correspondence with the boundary conditions we require
\begin{gather*}
\frac{d \Ec_{\nu}^{m}\left(z,k^2\right)}{dz}\bigg|_{z=0}=\frac{d \Ec_{\nu}^{m}\left(z,k^2\right)}{dz}\bigg|_{z=K}=0, \\
\frac{d  \Es_{\nu}^{m+1}\left(z,k^2\right)}{dz}\bigg|_{z=0} =\Es_{\nu}^{m+1}\left(K,k^2\right)=0.
\end{gather*}
By similar reason to the $m$ odd case, when considering the boundary condition at $z=K$ both~$\eta_{\nu,c}^{m}$ and~$\eta_{\nu,s}^{m+1}$ are necessarily exponentially small as~$\kappa\to\infty$. Hence by using the boundary condition at $z=0$ we have the condition
\begin{gather*}
U'\left(-\ifrac{1}{2}\kappa\left(\widetilde{\sigma}_{\nu}^{m}\right)^2,0\right)
+\mathcal{O}\left(\kappa^{-1}\right)=-\frac{2^{\left(\kappa\left(\widetilde{\sigma}_{\nu}^{m}\right)^2+1\right)/4}
\sqrt{\pi}}{\Gamma\left(\frac{1}{4}\left(1-\kappa\left(\widetilde{\sigma}_{\nu}^{m}\right)^2\right)\right)}+\mathcal{O}\left(\kappa^{-1}\right)=0.
\end{gather*}
To satisfy this we require that
\begin{gather*}
\ifrac{1}{2}\kappa\left(\widetilde{\sigma}_{\nu}^{m}\right)^2=j+\ifrac{1}{2}+\mathcal{O}\left(\kappa^{-1}\right),
\end{gather*}
where $j$ is an even integer.

As $\kappa\to\infty$, the zeros of $U\big({-}\frac{1}{2}\kappa\left(\widetilde{\sigma}_{\nu}^{m}\right)^2,\sqrt{2\kappa}\zeta\big)$ tend to the zeros of $D_{j}\big(\sqrt{2\kappa}\zeta\big)$, the parabolic cylinder functions with exactly $j$ zeros in it's oscillatory interval (see \cite[\S~12.11]{NIST:DLMF}). Thus in correspondence with the number of zeros of the Lam\'{e} functions and those of $D_{j}\big(\sqrt{2\kappa}\zeta\big)$, we deduce that $j=m$ and as such from~(\ref{LEigen4}) and~(\ref{LEigen44}) we have that
\begin{gather}\label{LEigen5}
\left(\sigma_{\nu}^{m}\right)^2=\frac{2m+1}{\kappa}+\mathcal{O}\left(\kappa^{-2}\right).
\end{gather}
We deduce from (\ref{LEigen4}) and (\ref{LEigen1}) that
\begin{gather}\label{Eigenproof}
a_{\nu}^{m}=(2m+1)\kappa+\mathcal{O}(1),\qquad
b_{\nu}^{m+1}=(2m+1)\kappa+\mathcal{O}(1), \qquad \kappa\to\infty.
\end{gather}

\section[Approximations in terms of parabolic cylinder $D$ functions]{Approximations in terms of parabolic cylinder $\boldsymbol{D}$ functions}\label{CaseB}

The Lam\'{e} functions decay exponentially on either side of the oscillatory interval in~$(-K,K)$. Our approximant in Section~\ref{CaseA}, $U\left(-\frac{1}{2}\kappa\widetilde{\sigma}^2,\sqrt{2\kappa}\zeta\right)$, displays the appropriate exponentially decreasing behaviour when $\zeta$ is large and positive, but when the variable is large and negative it becomes exponentially large. If our argument $-\frac{1}{2}\kappa\widetilde{\sigma}^2$ had been exactly a negative half-integer, which it is not, then the approximant would have exhibited this wanted exponentially decaying behaviour when the variable is both large and positive and large and negative.

With respect to (\ref{LEigen5}) we def\/ine
\begin{gather}\label{LEigen55}
\left({\omega}_{\nu}^{m}\right)^2=\left({\sigma}_{\nu}^{m}\right)^2-\frac{2m+1}{\kappa}=\mathcal{O}\left(\kappa^{-2}\right)  \qquad (\kappa\to\infty),
\end{gather}
thus it makes sense to split up (\ref{LDiff2.5}) so that
\begin{gather}\label{LDiff4}
\frac{d^{2}W_{\nu}^{m}\left(\zeta,k^2\right)}{d\zeta^{2}} =\left(\kappa^2\left(\zeta^2-\frac{2m+1}{\kappa}\right)+\widehat{\psi}_{\nu}^{m}\left(\zeta,k^2\right)\right)W_{\nu}^{m}\left(\zeta,k^2\right),
\end{gather}
where
\begin{gather*}
\widehat{\psi}_{\nu}^{m}\left(\zeta,k^2\right)=\psi_{\nu}^{m}\left(\zeta,k^2\right)-\kappa^2\left({\omega}_{\nu}^{m}\right)^2.
\end{gather*}
From (\ref{LEigen55}) and since $s_{\nu}^{m}$, $\sigma_{\nu}^{m}$ and the variables $x$ and $\zeta$ are all bounded as $\kappa\to\infty$, we have for $\zeta\in[-\zeta_{*},\zeta_{*}]$
\begin{gather*}
\widehat{\psi}_{\nu}^{m}\left(\zeta,k^2\right)=\mathcal{O}(1),
\end{gather*}
in this limit uniformly. Now our approximant will have the desired property of decaying exponentially on either side of the oscillatory interval for large positive and large negative $\zeta$. We obtain the solutions for (\ref{LDiff4})
\begin{gather}
W_{\nu,1}^{m}\left(\zeta,k^2\right) =D_{m}\left(\sqrt{2\kappa}\zeta\right)+\epsilon_{\nu,1}^{m}\left(\zeta,k^2\right),\nonumber\\
 W_{\nu,2}^{m}\left(\zeta,k^2\right) =\overline{D}_{m}\left(\sqrt{2\kappa}\zeta\right)+\epsilon_{\nu,2}^{m}\left(\zeta,k^2\right),\label{LSoln2}
\end{gather}
valid when $\zeta\in[0,\zeta_{*}]$, where we denoted for convenience
\begin{gather*}
\overline{D}_{m}(t)=\overline{U}\left(-m-\ifrac{1}{2},t\right).
\end{gather*}
We have that
\begin{gather*}
\widehat{\psi}_{\nu}^{m}\left(0,k^2\right)=-\kappa^2\left({\omega}_{\nu}^{m}\right)^2+\psi_{\nu}^{m}\left(0,k^2\right)
=-\kappa^2\left({\sigma}_{\nu}^{m}\right)^2+(2m+1)\kappa+\frac{\left({s}_{\nu}^{m}\right)^4-\left({\sigma}_{\nu}^{m}\right)^4}{2\left({\sigma}_{\nu}^{m}\right)^2\left({s}_{\nu}^{m}\right)^4}
\end{gather*}
In Part \ref{parttwo}, Section~\ref{eigenvalues}, we will show that
\begin{gather*}
\left({\sigma}_{\nu}^{m}\right)^2 =\frac{2m+1}{\kappa}-\frac{1+k^2}{8\kappa^2}+\mathcal{O}\left(\kappa^{-3}\right).
\end{gather*}
We combine this with (\ref{LLEigen}) and obtain
\begin{gather}\label{psi01}
\widehat{\psi}_{\nu}^{m}\left(0,k^2\right) =\mathcal{O}\left(\kappa^{-1}\right) \qquad (\kappa\to\infty).
\end{gather}

This time in the error analysis we choose it so that
\begin{gather*}
\Omega(z)=1+z,
\end{gather*}
and, hence, take as the variational operator
\begin{gather*}
\mathcal{V}_{a,b}\left(\widetilde{\psi}_{\nu}^{m}\right) = \int_{a}^{b}\frac{\big|\widetilde{\psi}_{\nu}^{m}\left(t,k^2\right)\big|}{1+\sqrt{2\kappa}t}dt.
\end{gather*}
The bounds for the errors are
\begin{gather*}
\left|\epsilon_{\nu,1}^{m}\left(\zeta,k^2\right)\right|  \leq \frac{\textbf{M}_{m}\left(\sqrt{2\kappa}\zeta\right)}{\textbf{E}_{m}\left(\sqrt{2\kappa}\zeta\right)}
\left[\exp\left(\ifrac{1}{2}\sqrt{\pi/\kappa}  l_{m,1}\mathcal{V}_{\zeta,\zeta_{*}}\big(\widetilde{\psi}_{\nu}^{m}\big)\right)-1\right],\\
\left|\epsilon_{\nu,2}^{m}\left(\zeta,k^2\right)\right|  \leq \textbf{E}_{m}\big(\sqrt{2\kappa}\zeta
\big)\textbf{M}_{m}\big(\sqrt{2\kappa}\zeta
\big)\left[\exp\left(\ifrac{1}{2}\sqrt{\pi/\kappa}  l_{m,1}\mathcal{V}_{0,\zeta}\big(\widetilde{\psi}_{\nu}^{m}\big)\right)-1\right],
\end{gather*}
where
\begin{gather*}
l_{m,1}=\sup_{z\in(0,\infty)}\left\{\frac{\Omega(z)\textbf{M}^2_{m}(z)}{\Gamma(m+1)}\right\}.
\end{gather*}
We def\/ine for convenience
\begin{gather*} \nonumber
\textbf{E}\left(-m-\ifrac{1}{2},z\right)=\textbf{E}_{m}(z), \qquad \textbf{M}\left(-m-\ifrac{1}{2},z\right)=\textbf{M}_{m}(z), \qquad \textbf{N}\left(-m-\ifrac{1}{2},z\right)=\textbf{N}_{m}(z).
\end{gather*}
We have the bounds
\begin{gather*}
\mathcal{V}_{a,b}\big(\widetilde{\psi}_{\nu}^{m}\big) = \int_{a}^{b}\frac{\big|\widetilde{\psi}_{\nu}^{m}\big(t,k^2\big)\big|}{1+\sqrt{2\kappa}t}dt\leq  \int_{a}^{b}\frac{\big|\widetilde{\psi}_{\nu}^{m}\big(0,k^2\big)\big|}{1+\sqrt{2\kappa}t}dt+\int_{a}^{b}
\frac{\big|\widetilde{\psi}_{\nu}^{m}\big(t,k^2\big)-\widetilde{\psi}_{\nu}^{m}\big(0,k^2\big)\big|}{\sqrt{2\kappa}t}dt,
\end{gather*}
thus from (\ref{psi01}) we have
\begin{gather*}
\mathcal{V}_{0,\zeta}\big(\widehat{\psi}_{\nu}^{m}\big)=\mathcal{O}\big(\kappa^{-1/2}\big) \qquad \textrm{and} \qquad  \mathcal{V}_{\zeta,\zeta_{*}}\big(\widehat{\psi}_{\nu}^{m}\big)=\mathcal{O}\big(\kappa^{-1/2}\big) \qquad \textrm{as} \quad \kappa\to\infty.
\end{gather*}
From (\ref{Olvref}) clearly $l_{m,1}$ is a bounded constant as $\kappa\to\infty$.
Hence we have
\begin{gather}\label{errorsL}
\epsilon_{\nu,1}^{m}\left(\zeta,k^2\right) = \frac{\textbf{M}_{m}\left(\sqrt{2\kappa}\zeta \right)}{\textbf{E}_{m}\left(\sqrt{2\kappa}\zeta\right)}\mathcal{O}\left(\kappa^{-1}\right),
 \qquad  \frac{\epsilon_{\nu,2}^{m}\left(\zeta,k^2\right)}{ \textbf{E}_{m}\left(\sqrt{2\kappa}\zeta\right)} =\textbf{M}_{m}\big(\sqrt{2\kappa}\zeta \big)\mathcal{O}\left(\kappa^{-1}\right).
\end{gather}
Applying similar analysis to the above, again from Theorem~1 in \cite[\S~6]{Olver1975}, we obtain
\begin{gather*}
\frac{\partial \epsilon_{1}\left(\zeta,k^2\right)}{\partial\zeta} = \frac{\textbf{N}_{m}\left(\sqrt{2\kappa}\zeta\right)}{\textbf{E}_{m}\left(\sqrt{2\kappa}\zeta\right)}\mathcal{O}\big(\kappa^{-1/2}\big),
\qquad \frac{{\partial \epsilon_{2}\left(\zeta,k^2\right)}/{\partial\zeta}}{\textbf{E}_{m}\left(\sqrt{2\kappa}\zeta\right)} = \textbf{N}_{m}\big(\sqrt{2\kappa}\zeta\big)\mathcal{O}\big(\kappa^{-1/2}\big).
\end{gather*}

Thus from (\ref{LSoln2}), (\ref{LLiouv}) and (\ref{LTran}) we obtain
\begin{gather*}
\Ec_{\nu}^{m}\left(z,k^2\right) =C_{\nu}^{m}\left(\frac{\zeta^2-\left(\sigma_{\nu}^{m}\right)^2}{x^2-\left(s_{\nu}^{m}\right)^2}\right)^{1/4}\\
\hphantom{\Ec_{\nu}^{m}\left(z,k^2\right) =}{}
\times \left(D_{m}\big(\sqrt{2\kappa}\zeta\big)+\epsilon_{\nu,1}^{m}\left(\zeta,k^2\right)
 +\eta_{\nu,c}^{m} \left(\overline{D}_{m}\big(\sqrt{2\kappa}\zeta\big)+\epsilon_{\nu,2}^{m}\left(\zeta,k^2\right)\right)\right),\\
\Es_{\nu}^{m+1}\left(z,k^2\right) = S_{\nu}^{m+1}\left(\frac{\zeta^2-\left(\sigma_{\nu}^{m}\right)^2}{x^2-\left(s_{\nu}^{m}\right)^2}\right)^{1/4}\\
\hphantom{\Es_{\nu}^{m+1}\left(z,k^2\right) =}{}
\times \left(D_{m}\big(\sqrt{2\kappa}\zeta\big)+\epsilon_{\nu,1}^{m}\left(\zeta,k^2\right)
 +\eta_{\nu,s}^{m+1} \left(\overline{D}_{m}\big(\sqrt{2\kappa}\zeta\big)+\epsilon_{\nu,2}^{m}\left(\zeta,k^2\right)\right)\right),
\end{gather*}
valid as $\kappa\to\infty$ for $z\in[0,K]$, where $\epsilon_{\nu,1}^{m}\left(\zeta,k^2\right)$ and $\epsilon_{\nu,2}^{m}\left(\zeta,k^2\right)$ are given in~(\ref{errorsL}), and $\eta_{\nu,c}^{m}$ and $\eta_{\nu,s}^{m+1}$ are constants to be determined. Since we have the boundary conditions
\begin{gather*}
\frac{d \Ec_{\nu}^{m}\left(z,k^2\right)}{dz}\bigg|_{z=K}=0, \qquad \Es_{\nu}^{m+1}\left(K,k^2\right)=0,
\end{gather*}
we obtain that necessarily both $\eta_{\nu,c}^{m}$ and $\eta_{\nu,s}^{m+1}$ are $\mathcal{O}\big(e^{-\kappa\zeta_{*}^2}{\kappa}^{m+1/2}\big)$ as $\kappa\to\infty$. Only when~$\zeta$ nears the endpoint the contribution from the $\overline{D}_{m}\big(\sqrt{2\kappa}\zeta\big)+\epsilon_{\nu,2}^{m}\left(\zeta,k^2\right)$ term is comparable to $D_{m}\big(\sqrt{2\kappa}\zeta\big)+\epsilon_{\nu,1}^{m}\left(\zeta,k^2\right)$.
These functions will be made unique by their normalisation.

\subsection*{A second term in the approximation in terms of parabolic cylinder $\boldsymbol{D}$ functions}

In~\cite[\S~11]{Olver1975} uniform asymptotic expansions are included. The two term version is of the form
\begin{gather*}
W_{\nu,1}^{m}\left(\zeta,k^2\right)\sim A_0D_{m}\big(\sqrt{2\kappa}\zeta\big)+\frac{B_0(\zeta,\kappa)}{\kappa^2}\frac{d}{d\zeta}D_{m}\big(\sqrt{2\kappa}\zeta\big),
\end{gather*}
in which we have taken $A_0=1$ and for $B_0(\zeta,\kappa)$ we have
\begin{gather*}
B_0(\zeta,\kappa)=\int_{\tilde\sigma}^\zeta \frac{\widehat{\psi}_{\nu}^{m}\left(t,k^2\right)}{2\sqrt{\left(\zeta^2-{\tilde\sigma}^2\right)\left(t^2-{\tilde\sigma}^2\right)}}  dt,
\end{gather*}
where $\tilde\sigma=\sqrt{(2m+1)/\kappa}$. This coef\/f\/icient depends on $\kappa$ and its dominant part is
\begin{gather}\label{BB0}
B_0(\zeta)=\lim_{\kappa\to\infty}B_0(\zeta,\kappa)=\frac1{2\zeta}\int_0^\zeta\frac{\lim\limits_{\kappa\to\infty}\widehat{\psi}_{\nu}^{m}\left(t,k^2\right)}{t} dt.
\end{gather}
To simplify the integrand we take the f\/inal equation in~(\ref{LInts}) and let $\kappa\to\infty$. The result is
\begin{gather*}
\ifrac12\zeta^2=\int_0^x\frac{t  dt}{\sqrt{\left(1-t^2\right)\left(1-k^2t^2\right)}}=\frac1{k}\ln\left(\frac{1+k}{\sqrt{1-k^2x^2}+k\sqrt{1-x^2}}\right).
\end{gather*}
This relation can be inverted
\begin{gather*}
x^2=k^{-1}\sinh\left(k\zeta^2\right)-\left(1+k^{-2}\right)\sinh^2\left(k\zeta^2/2\right).
\end{gather*}
We use this in
\begin{gather*}
\lim_{\kappa\to\infty}\widehat{\psi}_{\nu}^{m}\left(\zeta,k^2\right)
=\frac3{4\zeta^2}+\frac{\zeta^2}{4}\left(\frac{k^2\left(x^2+x^4\right)+x^2-3}{x^4}\right)+\frac{1+k^2}8,
\end{gather*}
and are able to evaluate the integral in (\ref{BB0}) and obtain
\begin{gather*}
32\zeta B_0(\zeta)=\left(k^2+1\right)\ln\left(\ifrac14\zeta^2C(\zeta,k)\right)-\frac{3\left(k^2-1\right)^2}{2C(\zeta,k)}
+3k\coth\left(k\zeta^2/2\right)+2k^2\zeta^2-\frac{6}{\zeta^2},
\end{gather*}
where $C(\zeta,k)=2k\coth\left(k\zeta^2/2\right)-k^2-1$. Thus our two term approximation is
\begin{gather*}
\Ec_{\nu}^{m}\left(z,k^2\right)\sim C_{\nu}^{m}\left(\frac{\zeta^2-\left(\sigma_{\nu}^{m}\right)^2}{x^2-\left(s_{\nu}^{m}\right)^2}\right)^{1/4}\left(D_{m}\big(\sqrt{2\kappa}\zeta\big)
+\frac{B_0(\zeta)}{\kappa^2}\frac{d}{d\zeta}D_{m}\big(\sqrt{2\kappa}\zeta\big)\right),
\end{gather*}
as $\kappa\to\infty$ and similarly for $\Es_{\nu}^{m+1}\left(z,k^2\right)$.

\subsection*{Normalisation constant}
We now want an approximation for $C_{\nu}^{m}$ and $S_{\nu}^{m+1}$.
Due to the complicated nature of the mapping between $x$ and $\zeta$, it is not simple to
express one in terms of the other.
With respect to~(\ref{LNorm}) we consider the integral
\begin{gather*}
\left(C_{\nu}^{m}\right)^2\int_{-K}^{K} \dn(z,k) \sqrt{\frac{\zeta^2-\left(\sigma_{\nu}^{m}\right)^2}{x^2-\left(s_{\nu}^{m}\right)^2}}D^2_{m}
\big(\sqrt{2\kappa}\zeta \big) dz.
\end{gather*}
Using only this f\/irst term in an expansion for the solution, we can only obtain coef\/f\/icients for up to~$1/h$ terms. Any further terms in an expansion for the solution would contribute to further terms in the normalisation constant expansion, which we will not consider here. Since the f\/irst terms in their respective function uniform approximations are the same for large~$\kappa$, it will be the same for both~$C_{\nu}^{m}$ and~$S_{\nu}^{m+1}$.

First we perform the transformation $x=\sn(z,k)$ to obtain
\begin{gather*}
\left(C_{\nu}^{m}\right)^2\int_{-1}^{1} \sqrt{\frac{\zeta^2-\left(\sigma_{\nu}^{m}\right)^2}{\big(x^2-\left(s_{\nu}^{m}\right)^2\big)\left(1-x^2\right)}}D^2_{m}
\big(\sqrt{2\kappa}\zeta \big) dx,
\end{gather*}
then another transformation to the $\zeta$ variable to obtain
\begin{gather*}
\left(C_{\nu}^{m}\right)^2\int_{-\zeta_{*}}^{\zeta^{*}} \frac{\zeta^2-\left(\sigma_{\nu}^{m}\right)^2}{x^2-\left(s_{\nu}^{m}\right)^2}\sqrt{1-k^2x^2}D^2_{m}
\big(\sqrt{2\kappa}\zeta \big) d\zeta.
\end{gather*}
As the oscillatory behaviour of $D_{m}\big(\sqrt{2\kappa}\zeta\big)$ happens in a shrinking region of the origin as $\kappa\to\infty$, we seek to approximate the integral around this point to get an approximation for~$C_{\nu}^{m}$. Thus we consider an expansion of the form
\begin{gather}\label{Term1}
x=\sum_{k=1}^{\infty}c_{k}\zeta^{k},
\end{gather}
as $\zeta\to 0$, and substituting this into the relation
\begin{gather}\label{Term2}
\frac{dx}{d\zeta}=\sqrt{\frac{\big(\zeta^2-\left(\sigma_{\nu}^{m}\right)^2\big)\left(1-x^2\right)\left(1-k^2x^2\right)}{x^2-\left(s_{\nu}^{m}\right)^2}}.
\end{gather}
From this we can determine, by matching~(\ref{Term1}) and~(\ref{Term2}), the $c_{k}$ terms, and obtain
\begin{gather*}
x=\frac{\sigma_{\nu}^{m }}{s_{\nu}^{m}}\zeta+\left(\frac{(\sigma_{\nu}^{m })^4-(s_{\nu}^{m })^4}{6(s_{\nu}^{m })^5\sigma_{\nu}^{m }}-\frac{\left(1+k^2\right)(\sigma_{\nu}^{m })^3}{6(s_{\nu}^{m })^3}\right)\zeta^3+\mathcal{O}\left(\zeta^5\right),
\end{gather*}
as $\zeta\to 0$. From this we obtain
\begin{gather*}
\frac{\zeta^2-\left(\sigma_{\nu}^{m}\right)^2}{x^2-\left(s_{\nu}^{m}\right)^2}\sqrt{1-k^2x^2}
=\frac{\left(\sigma_{\nu}^{m}\right)^2}{\left(s_{\nu}^{m}\right)^2}
+\frac{2\left(\sigma_{\nu}^{m}\right)^4-2\left(s_{\nu}^{m}\right)^4-k^2\left(s_{\nu}^{m}\right)^2
\left(\sigma_{\nu}^{m}\right)^4}{2\left(s_{\nu}^{m}\right)^6}\zeta^2+\mathcal{O}\left(\zeta^4\right),
\end{gather*}
as $\zeta\to 0$. With respect to this, we consider as a f\/irst approximation for $C_{\nu}^{m}$ the integral
\begin{gather*}
\left(C_{\nu}^{m}\right)^2\int_{-\zeta_{*}}^{\zeta_{*}}\left(\frac{\left(\sigma_{\nu}^{m}\right)^2}{\left(s_{\nu}^{m}\right)^2}
+\frac{\left(2\sigma_{\nu}^{m}\right)^4-2\left(s_{\nu}^{m}\right)^4
-k^2\left(s_{\nu}^{m}\right)^2\left(\sigma_{\nu}^{m}\right)^4}{2\left(s_{\nu}^{m}\right)^6}\zeta^2\right)D^2_{m}\big(\sqrt{2\kappa}\zeta\big)d\zeta.
\end{gather*}
and letting $t=\sqrt{2\kappa}\zeta$, we have the approximation
\begin{gather*}
\frac{\left(C_{\nu}^{m}\right)^2}{\sqrt{2\kappa}}\int_{-\infty}^{\infty}\left(1+\frac{\left(1+k^2\right)(2m+1)+t^2-k^2t^2}{8\kappa}\right)D^2_{m}(t)dt
\sim\frac{\left(C_{\nu}^{m}\right)^2m!}{\sqrt{\kappa/\pi}}\left(1+\frac{2m+1}{4\kappa}\right).
\end{gather*}
In correspondence with (\ref{Lsigns}) and that for large $z$,
\begin{gather*}
D_{m}(z)>0, \qquad D_{m}'(z)<0 \qquad \textrm{(see \cite[\S~12.9(i)]{NIST:DLMF}},
\end{gather*}
we deduce from (\ref{LNorm}) that as $\kappa\to\infty$
\begin{gather*}\left.
\begin{array}{@{}l@{}}
C_{m} \\
S_{m+1} \\
\end{array} \right\}
\sim\frac{\left(\pi \kappa\right)^{{1}/{4}}}{\sqrt{2m!}}\left(1-\frac{2m+1}{8\kappa}\right).
\end{gather*}

\section{A special case: Mathieu functions}\label{Mathieu}

We obtain rigorous uniform results for Mathieu's equation, using the limiting arguments given in~(\ref{Msoln}) and~(\ref{MEigen1}).
Thus (reader can check the details, we will just summarise) we have the uniform approximations as $h\to\infty$
\begin{gather*}
\ce_{m}(h,z) = C_{m}\left(\frac{\zeta^2-\sigma_{m}^{2}}{x^2-s_{m}^{2}}\right)^{1/4}\\
\hphantom{\ce_{m}(h,z) =}{}
\times\left(D_{m}\big(2\sqrt{h}\zeta\big)+\epsilon_{m,1}(\zeta)
 +\eta_{m}^{c} \big(\overline{D}_{m}\big(\sqrt{2}h\zeta\big)+\epsilon_{m,2}(\zeta)\big)\right),\\
\se_{m+1}(h,z) = S_{m+1}\left(\frac{\zeta^2-\left(\sigma_{\nu}^{m}\right)^2}{x^2-\left(s_{\nu}^{m}\right)^2}\right)^{1/4}\\
\hphantom{\se_{m+1}(h,z) =}{}
\times
\left(D_{m}\big(2\sqrt{h}\zeta\big)+\epsilon_{m,1}(\zeta)
 +\eta_{m+1}^{s} \big(\overline{D}_{m}\big(\sqrt{2}h\zeta\big)+\epsilon_{m,2}(\zeta)\big)\right),
\end{gather*}
valid for $\zeta\in[0,\zeta_{*}]$, i.e.,  $z\in[0,\frac{\pi}{2}]$, as $h\to\infty$, where
\begin{gather*}
\epsilon_{m,1}(\zeta) =\frac{\textbf{M}_{m}\left(\sqrt{2}h\zeta \right)}{\textbf{E}_{m}\left(\sqrt{2}h\zeta\right)}\mathcal{O}\left(h^{-1}\right), \qquad \epsilon_{m,2}(\zeta)=\textbf{E}_{m}\big(\sqrt{2}h\zeta\big)\textbf{M}_{m}\big(\sqrt{2}h\zeta \big)\mathcal{O}\left(h^{-1}\right)
\end{gather*}
both $\eta_{m}^{c}$ and $\eta_{m+1}^{s}$ are $\mathcal{O}\big(e^{-2h\zeta_{*}^2}h^{m+1/2}\big)$ as $h\to\infty$,
the relationship between $z$ and $x$ is def\/ined by $x=\cos{z}$ and by evaluating the integrals (\ref{LInts}), the relationship between $x$ and $\zeta$ is def\/ined~by
\begin{alignat*}{3}
& \int_{x}^{-s_{m}}\sqrt{
\frac{t^{2}-s_{m}^2}{(1-t^{2})\left(1-k^2t^2\right)}}dt  =\int_{\zeta}^{-\sigma_{m}}\sqrt{\tau^{2}-\sigma_{m}^2}d\tau, \qquad && -1<x \leq -s_{m},&\\
 & \int_{-s_{m}}^{x}\sqrt
{\frac{s_{m}^2-t^{2}}{(1-t^{2})\left(1-k^2t^2\right)}}dt  = \int_{-\sigma_{m}}^{\zeta}\sqrt{\sigma_{m}^2-\tau^{2}} d\tau, \qquad && -s_{m}\leq x \leq s_{m},&\\
& \int_{s_{m}}^{x}\sqrt
{\frac{t^{2}-s_{m}^2}{(1-t^{2})\left(1-k^2t^2\right)}} dt  = \int_{\sigma_{m}}^{\zeta}\sqrt{\tau^{2}-\sigma_{m}^2}d\tau, \qquad && s_{m} \leq x < 1,
\end{alignat*}
where
\begin{gather*}
s^2_m=\frac{\lambda_{m}+2h^2}{4h^2}, \qquad \sigma_{m}^{2}=
s^2_{m} \, {}_2F_1\left({\frac12,\frac12\atop 2};s_{m}^2\right),
\end{gather*}
and $\lambda_{m}$ corresponds to either $a_{m}$ or $b_{m+1}$ depending on the solution we are considering, and
\begin{gather*}
\left.
\begin{array}{@{}l@{}}
C_{m} \\
S_{m+1}
\end{array}\right\}
\sim\left(\frac{\pi h}{2m!^2}\right)^{1/4}\left(1-\frac{2m+1}{16h}\right).
\end{gather*}
We have that
\begin{gather*}
s_{m}^2=\frac{m+\frac{1}{2}}{h}+\mathcal{O}\left(h^{-2}\right) \qquad \textrm{as} \quad h\to\infty,
\end{gather*}
thus from (\ref{LEigen1}) and (\ref{MEigen1}) we obtain
\begin{gather*}
a_{m}=-2h^2+(4m+2)h+\mathcal{O}(1),\qquad
b_{m+1}=-2h^2+(4m+2)h+\mathcal{O}(1),
\end{gather*}
as $h\to\infty$. One should note that this result about the eigenvalues was given formally in \cite[\S~2.33, Satz~5 \& Satz~6]{MS1954}.

Two term approximations are given by
\begin{gather*} \nonumber
\ce_{m}(h,z) \sim C_{m}\left( D_{m}\big(2\sqrt{h}\zeta\big)+\frac{B_0(\zeta)}{h^2}\frac{d}{d\zeta}D_{m}\big(2\sqrt{h}\zeta\big)\right),\\
\se_{m+1}(h,z) \sim S_{m+1}\left( D_{m}\big(2\sqrt{h}\zeta\big)+\frac{B_0(\zeta)}{h^2}\frac{d}{d\zeta}D_{m}\big(2\sqrt{h}\zeta\big)\right),
\end{gather*}
where for $B_0(\zeta)$ we have the relation
\begin{gather*}
256   B_0(\zeta)=\frac{3\zeta}{4-\zeta^2}-\frac{2}{\zeta}\ln\left(1-\frac{\zeta^2}{4}\right).
\end{gather*}

\part{Uniform asymptotic expansions}\label{parttwo}

In Part \ref{partone}, uniform asymptotic approximations were given for the Lam\'{e} and Mathieu functions when a parameter became large. A third term in an asymptotic expansion could not be computed due to the complicated nature of the transformation of the independent variable. In Section~\ref{expansion} of this part we employ a simpler transformation than in Part~\ref{partone} such that we can construct formal asymptotic expansions for the Lam\'{e} functions and their corresponding eigenvalues in the forms
\begin{gather*}
w_{\nu}^{m}(t)\sim D_{m}(t)\sum_{s=0}^{\infty}\frac{A_{s}(t)}{\kappa^s}+D_{m}'(t)\sum_{s=0}^{\infty}\frac{B_{s}(t)}{\kappa^{s}}, \qquad h\sim(2m+1)\kappa+\sum_{s=0}^{\infty}\frac{\mu_{s}}{\kappa^{s}},
\end{gather*}
where $t=\sqrt{2\kappa}\sn{(z,k)}$, the coef\/f\/icients $A_{s}(t)$ and $B_{s}(t)$ are just  polynomials,
and the $\mu_{s}$ are constants in terms of~$m$ and~$k$. The function expansions will clearly only make sense when $t=\mathcal{O}(1)$ as $\kappa\to\infty$. In Section~\ref{eigenvalues}, we analyse the asymptotic expansions for the eigenvalues and give an order estimate for the error corresponding to the truncated eigenvalue expansion. Then in Section~\ref{expansionerror} we use these results to give rigorous and realistic error bounds for the function expansions upon truncation. In Sections~\ref{identify2} and~\ref{summary2} we identify our expansions with the Lam\'{e} functions and then summarise our results. Finally in Section~\ref{specialmathieu2} we give analogous results for the Mathieu functions and their corresponding eigenvalues as a~special limiting case of those given for the Lam\'{e} functions and their corresponding eigenvalues.

\section{Uniform asymptotic expansions of the Lam\'{e} functions}\label{expansion}
In this section we construct formal asymptotic expansions for solutions of Lam\'{e}'s equation and corresponding eigenvalues. We do this by performing a simpler transformation than employed in Part~\ref{partone}. The oscillatory behaviour of the Lam\'{e} functions happens in a shrinking neighbourhood of the origin as $\kappa\to\infty$, and it can be shown that around the origin $\zeta$ behaves approximately like~$\sn(z,k)$. Thus the variable in the parabolic cylinder function around this point behaves approximately like~$\sqrt{2\kappa}\sn(z,k)$. This motivates the next simpler transformation.

\subsection*{$\boldsymbol{t}$-plane}
Letting $t=\sqrt{2\kappa}\sn(z,k)$ in (\ref{LDiff1}) we have
\begin{gather*}
\frac{d^2w}{dt^2}+\frac{2h-t^2\kappa}{4\kappa}w-\frac{1}{2\kappa} \left(\left(\left(1+k^2\right)t^2-\frac{k^2t^4}{2\kappa}\right)\frac{d^2}{dt^2}+t\left(1+k^2-\frac{k^2t^2}{\kappa}\right)\frac{d}{dt}\right) w=0,
\end{gather*}
where $z\in(-K,K)$ corresponds to $t\in\big({-}\sqrt{2\kappa},\sqrt{2\kappa}\big)$. Supposing in accordance with~(\ref{Eigenproof}) that
\begin{gather}\label{LLEigen2}
\frac{h}{2\kappa}=m+\ifrac{1}{2}+\sum_{s=1}^{n}\frac{\mu_{s}}{\kappa^{s}}+\frac{\widetilde{\mu}_{n+1}}{\kappa^{n+1}},
\end{gather}
where $n$ is a positive integer and $\widetilde{\mu}_{n}$ can be re-expanded in a sensible manner.
We can then write Lam\'{e}'s equation in the form
\begin{gather}\nonumber
 \frac{d^2w}{dt^2}+\left(m+\frac{1}{2}-\frac{t^2}{4}\right) w+\frac{1}{\kappa}\left\{\left(-\frac{1}{2}\left(1+k^2\right)t^2+\frac{k^2t^4}{4\kappa}\right)\frac{d^2}{dt^2}\right.
\\ \label{Ldiff2}
\left.\qquad{} +\frac{t}{2}\left(-1-k^2+\frac{k^2t^2}{\kappa}\right)
\frac{d}{dt}+\sum_{s=0}^{n}\frac{\mu_{s+1}}{\kappa^{s}}+\frac{\widetilde{\mu}_{n+1}}{\kappa^{n+1}}\right\} w=0.
\end{gather}
This equation is split in such a way that constructing a formal asymptotic expansion in terms of parabolic cylinder functions $D_{m}(t)$ in the form
\begin{gather}\label{soln}
w_{\nu}^{m}\left(t,k^2\right)=D_{m}(t)\sum_{s=0}^{\infty}\frac{A_{s}(t)}{\kappa^s}+D_{m}'(t)\sum_{s=0}^{\infty}\frac{B_{s}(t)}{\kappa^{s}}
\end{gather}
seems sensible. However one should observe that this splitting only makes sense when $t=\mathcal{O}(1)$ as $h\to\infty$, hence this formal expansion is only sensible for this range of~$t$. We denote this range as~$(-t_{*},t_{*})$. One should note that whilst this appears to be a new ansatz, there are similar expansions given in the literature for the Mathieu functions. These expansions are given in terms of~$D_{m-4j}(t)$ for $j\in \Z$ instead of in terms of~$D_{m}(t)$ and~$D_{m}'(t)$, but using the recurrence relations for the parabolic cylinder functions it is obvious these expansions are equivalent up to normalisation. An expansion in the form we have given allows us to dif\/ferentiate easily and hence seems the most natural in this case, also allowing us to perform rigorous error analysis.

We seek solutions which are either even or odd respective to the parity of $m$. Since $D_{m}(t)$ is either even or odd respective to when $m$ is either even or odd, we deduce that $A_{s}(t)$ and $B_{s}(t)$ must be even and odd respectively.

Substituting (\ref{soln}) into (\ref{Ldiff2}) and equating powers of $\kappa$, we have the recurrence relations for~$A_{s}(t)$ and~$B_{s}(t)$
\begin{gather}\nonumber
 2A_{s}'(t)+B_{s}''(t)- \frac{t+tk^2}{2}\left(2tA_{s-1}'(t)+A_{s-1}(t)+tB_{s-1}''(t)+B_{s-1}'(t)+\tilde{t}(m)tB_{s-1}(t)\right)\\ \nonumber
\qquad{}+\frac{k^2t^3}{4}\left(2tA_{s-2}'(t)+2A_{s-2}(t)+tB_{s-2}''(t)+2B_{s-2}'(t)+\tilde{t}(m)tB_{s-2}(t)\right)\\
 \qquad{}+\sum_{j=1}^{s}\mu_{j}B_{s-j}(t)=0,\label{rec1}
\\
A_{s}''(t)+2\tilde{t}(m)B_{s}'(t)+\frac{t}{2}B_{s}(t)+\sum_{j=1}^{s}\mu_{j}A_{s-j}(t)
-\frac{t+tk^2}{2}\bigl(tA_{s-1}''(t)+A_{s-1}'(t)+\frac{t^2}2B_{s-1}(t)\nonumber
\\
\qquad{} +\tilde{t}(m)\left(tA_{s-1}(t)+2tB_{s-1}'(t)+B_{s-1}(t)\right)\bigr)
 +\frac{k^2t^3}{4}\bigl(tA_{s-2}''(t)+2A_{s-2}'(t)+\frac{t^2}2B_{s-2}(t)\nonumber
 \\
\qquad{}+\tilde{t}(m)\left(tA_{s-2}(t)+2tB_{s-2}'(t)+2B_{s-2}(t)\right)\bigr) =0,\label{rec2}
\end{gather}
where $\tilde{t}(m)=\frac14t^2-m-\frac12$.
Neither of these relations separately determine solutions for $A_{s}(t)$ or $B_{s}(t)$ from previous coef\/f\/icients, thus we dif\/ferentiate (\ref{rec1}) to obtain an expression for $A''_{s}(t)$ and substitute it into~(\ref{rec2}); this gives the third order inhomogeneous dif\/ferential equation for~$B_{s}(t)$
\begin{gather}\label{diffB}
B_{s}'''(t)-\left(t^2-4m-2\right)B_{s}'(t)-tB_{s}(t)=b_{s}(t),
\end{gather}
where
\begin{gather}\nonumber
b_{s}(t)= \sum_{j=1}^{s}\mu_{j}\left(2A_{s-j}(t)-B_{s-j}'(t)\right)\\
 \nonumber
\hphantom{b_{s}(t)=}{} +\frac{1+k^2}{2}\left[3tA_{s-1}'(t)+A_{s-1}(t)+t^2B_{s-1}'''(t)+3tB_{s-1}''(t)+B_{s-1}'(t) \right.\\ \nonumber
 \left.\hphantom{b_{s}(t)=}{} -\ifrac12t^3B_{s-1}(t)-\tilde{t}(m)t^2\left(2A_{s-1}(t)+3B_{s-1}'(t)\right)\right]\\
 \nonumber
\hphantom{b_{s}(t)=}{} -\frac{k^2t^2}{2}\left[3tA_{s-2}'(t)+3A_{s-2}(t)+\ifrac12t^2B_{s-2}'''(t)+3tB_{s-2}''(t)+3B_{s-2}'(t)\right.\\
 \left.\hphantom{b_{s}(t)=}{}
 -\ifrac14t^3B_{s-2}(t)-\ifrac12t^2\tilde{t}(m)\left(2A_{s-2}(t)+3B_{s-2}'(t)\right)\right].\label{littleb}
\end{gather}
Once $B_{s}(t)$ is determined, we can use (\ref{rec1}) to determine~$A_{s}(t)$.
There will be freedom in choosing the integration constants in the~$A_{s}(t)$ terms, with identif\/ication of our solutions made unique by their normalisation.

\subsection*{General coef\/f\/icients $\boldsymbol{B_{s}(t)}$ and $\boldsymbol{A_{s}(t)}$}

Using variation of parameters we obtain the general solution for $B_{s}(t)$
\begin{gather*}
B_{s}(t)=b^{1}_{s}(t) D^2_{m}(t) + b^{2}_{s}(t)  \overline{D}^2_{m}(t) + b^{3}_{s}(t) D_{m}(t)\overline{D}_{m}(t),
\end{gather*}
where $\overline{D}_{m}(t)=\overline{U}\left(-m-\frac12,t\right)$ (for $\overline{U}$ see formula~(12.2.21) in \cite{NIST:DLMF}), and
\begin{gather*}
b^{1}_{s}(t)  =\int \frac{b_{s}(t)\mathcal{W}\{\overline{D}^2,D\overline{D}\}}{\mathcal{W}\{D^2,\overline{D}^2,D\overline{D}\}}dt+c_{1},\qquad
b^{2}_{s}(t) =-\int \frac{b_{s}(t) \mathcal{W}\{D^2,D\overline{D}\}}{\mathcal{W}\{D^2,\overline{D}^2,D\overline{D}\}}dt+c_{2},\\
b^{3}_{s}(t) =\int \frac{b_{s}(t) \mathcal{W}\{D^2,\overline{D}^2\}}{\mathcal{W}\{D^2,\overline{D}^2,D\overline{D}\}}dt+c_{3}.
\end{gather*}
By expanding the Wronskians we derive the relations
\begin{alignat*}{3}
& \mathcal{W}\{D^2,D\overline{D}\} =\mathcal{W}\{D,\overline{D}\} D^2_{m}(t),\qquad && \mathcal{W}\{\overline{D}^2,D\overline{D}\}=-\mathcal{W}\{D,\overline{D}\} \overline{D}^2_{m}(t),& \\
& \mathcal{W}\{D^2,\overline{D}^2\} =2\mathcal{W}\{D,\overline{D}\} D_{m}(t)\overline{D}_{m}(t), \qquad&&
\mathcal{W}\{D^2,\overline{D}^2,D\overline{D}\}=-2 \mathcal{W}\{D,\overline{D}\}^3.&
\end{alignat*}
Then without loss of generality we can rewrite the indef\/inite integrals in $\left\{b^{i}_{s}(t)\right\}_{i=1}^{3}$ as def\/inite integrals from $0$ to $t$, and since $B_s(t)$ is supposed to be an odd function we have to
take $c_{1}=c_{2}=0$. Thus
\begin{gather}\label{Beqn}
B_{s}(t)=\frac{\pi}{4m!^2}\int_{0}^{t}b_{s}(\tau)\left(\overline{D}_{m}(\tau)D_{m}(t)-{D}_{m}(\tau)\overline{D}_{m}(t)\right)^2d\tau +c_{3} D_{m}(t)\overline{D}_{m}(t).
\end{gather}
Although we consider $t$ to be $\mathcal{O}(1)$ as $\kappa\to\infty$, we still want an expansion which exhibits the correct behaviour at the endpoints of the interval. Expanding the squared term in~(\ref{Beqn}) and splitting it into three separate integrals, large variable asymptotics for the parabolic cylinder functions (see \cite[\S~12.9]{NIST:DLMF}) tells us that the terms involving $D_{m}^2(t)$ or ${D}_{m}(t)\overline{D}_{m}(t)$ grow no faster than polynomials when~$t$ becomes large. Since
\begin{gather*}
\overline{D}_{m}(t)^2 \sim \frac{2m!^2}{\pi}e^{t^2/2}t^{-2m-2}, \qquad t\to\infty,
\end{gather*}
to ensure the boundedness of our formal expansion as~$t$ becomes large, $\mu_{s}$ is determined uniquely by the condition that
\begin{gather}\label{cond}
\int_{0}^{\infty} b_{s}(\tau){D}_{m}^2(\tau)\,d\tau=0.
\end{gather}
This condition will also ensure boundedness as $t\to-\infty$. Note that in the next section we will derive an alternative method to compute the~$\mu_{s}$ terms which we use to compute the coef\/f\/icients as it is simpler.

Consider f\/irst $s=0$. From~(\ref{littleb}) we see that $b_{0}(t)=0$, thus ensuring that~$B_{0}(t)$ is odd we obtain the general solution
\begin{gather*}
B_{0}(t)=c_{3}D_{m}(t)\overline{D}_{m}(t),
\end{gather*}
and then
\begin{gather*}
 A_{0}(t)=-\ifrac{1}{2}c_{3}\big(D_{m}'(t)\overline{D}_{m}(t)+D_{m}(t)\overline{D}_{m}'(t)\big)+c.
\end{gather*}
Rearranging, on the $\kappa^{0}$ level of the asymptotic expansion (\ref{soln}) we have
\begin{gather*}
\left(\tfrac{1}{2}c_{3}\mathcal{W}(D_{m},\overline{D}_{m})+c\right)D_{m}(t).
\end{gather*}
Thus having this term in $B_{0}(t)$ equates to $B_{0}(t)=0$ and an extra constant term in $A_{0}(t)$, and since we have freedom in the arbitrary constant terms in $A_{s}(t)$, we can take $c_{3}=0$
without loss of generality. For simplicity we take $A_{0}(t)=1$ and adopt the convention $A_{s}(t)=0$ for $s \geq 1$. With these choices we have
\begin{gather*}
b_{1}(t)=2\mu_{1}+\frac{1+k^2}4\left(2-t^2\left(t^2-4m-2\right)\right),
\end{gather*}
and expressing the $D_m(t)$ in (\ref{cond}) via \cite[formula~(12.7.2)]{NIST:DLMF} in terms of Hermite polynomials we can evaluate the integral in (\ref{cond}) and obtain
\begin{gather*}
\mu_{1}=-\ifrac{1}{8}\left(1+k^2\right)\left(1+2m+2m^2\right).
\end{gather*}
In the case $s=1$, it can be shown using integration by parts that for $\mu_{1}$ which satisf\/ies~(\ref{cond})
\begin{gather*}
 \frac{\pi}{4m!^2}\int_{0}^{t}b_{1}(\tau)\big(\overline{D}_{m}(\tau)D_{m}(t)-{D}_{m}(\tau)\overline{D}_{m}(t)\big)^2d\tau\\
\qquad{} =\frac{1+k^2}{16}\left(t^3-(2m+1)t+ (-1 )^m\frac{2m+1}{m!}\sqrt{\frac{\pi}{2}}{D}_{m}(t)\overline{D}_{m}(t)\right).
\end{gather*}
Thus clearly in this case, if the correct $c_{3}$ is chosen in~(\ref{Beqn}) then $B_{1}(t)$ is exactly an odd polynomial. Using similar observations as in the $s=0$ case, we also note that if this multiple of ${D}_{m}\overline{D}_{m}$ is included in the $B_{1}(t)$ term, the expansion can be rearranged so that this term is instead represented in the constant term of~$A_{1}(t)$. Taking $B_{1}(t)$ to be exactly an odd polynomial, we get that $A_{1}(t)$ is an even polynomial. If one would go through the details to compute the representation for $B_{1}(t)$ as a polynomial plus this multiple of ${D}_{m}\overline{D}_{m}$, one would see that it would appear for all~$s\geq1$ that if the previous~$A_{s}(t)$ and $B_{s}(t)$ terms are all polynomials, then the $A_{s}(t)$ and $B_{s}(t)$ terms can be represented as polynomials.

\subsection*{Polynomial coef\/f\/icients $\boldsymbol{B_{s}(t)}$ and $\boldsymbol{A_{s}(t)}$}

To obtain explicit expressions for $A_{s}(t)$ and $B_{s}(t)$ we try substituting in polynomial expansions with undetermined coef\/f\/icients.
Take $A_{0}(t)=1$ and $B_{0}(t)=0$, then for $s\geq 1$ we consider $A_{s}(t)$ and $B_{s}(t)$ in the form
\begin{gather*}
A_{s}(t)=\sum_{i=1}^{\infty}a_{s,i}t^{2i} \qquad \textrm{and} \qquad B_{s}(t)=\sum_{i=0}^{\infty}b_{s,i}t^{2i+1}.
\end{gather*}
Substituting these into (\ref{diffB}) and (\ref{rec1}) we obtain the recurrence relations for coef\/f\/icients
\begin{gather} \nonumber
(2i+3)(2i+2)(2i+1)b_{s,i+1}+(4m+2)(2i+1)b_{s,i}-2ib_{s,i-1}\\
\nonumber \qquad{}
+2\sum_{j=0}^{s-1}\mu_{s-j}\left(\left(i+\ifrac12\right)b_{j,i}-a_{j,i}\right)
 -\left(1+k^2\right)\Bigl[\left(3i+\ifrac12\right)a_{s-1,i}+\ifrac{1}{2}\left(2i+1\right)^3b_{s-1,i}\\
 \nonumber
 \qquad{}
 +\left(m+\ifrac{1}{2}\right)\left(a_{s-1,i-1}+\left(3i-\ifrac32\right)b_{s-1,i-1}\right)
  -\ifrac{1}{4}a_{s-1,i-2}-\left(\ifrac{3}{4}i-\ifrac{7}{8}\right)b_{s-1,i-2}\Bigr] \\
 \nonumber
 \qquad{} +k^2\Bigl[\left(3i-\ifrac32\right)a_{s-2,i-1}\!+2i\left(i^2\!-\ifrac14\right)b_{s-2,i-1} \!
 +(2m+1)\left(\ifrac{1}{4}a_{s-2,i-2}\!+\left(\ifrac{3}{4}i-\ifrac{9}{8}\right)b_{s-2,i-2}\right)\\
 \qquad{}
-\ifrac{1}{8}a_{s-2,i-3}-\left(\ifrac{3}{8}i-\ifrac{13}{16}\right)b_{s-2,i-3}\Bigr]=0,   \label{recc1}
\\
\nonumber
2(2i+2)a_{s,i+1}+(2i+3)(2i+2)b_{s,i+1}+\sum_{j=0}^{s-1}\mu_{s-j}b_{j,i}\\
\qquad{}-\ifrac{1}{2}\left(1+k^2\right)\left((4i+1)a_{s-1,i}+(2i+1)^2b_{s-1,i}
-\left(m+\ifrac{1}{2}\right)b_{s-1,i-1}+\ifrac{1}{4}b_{s-1,i-2}\right)\label{recc2} \\
\qquad{}+k^2\left(i\left(i-\ifrac12\right)b_{s-2, i-1}+\left(i-\ifrac12\right)a_{s-2,i-1}
-\ifrac18 (2m+1 )b_{s-2,i-2}+\ifrac{1}{16}b_{s-2,i-3}\right)=0.\nonumber
\end{gather}
From these recurrence relations it is observed that only a f\/inite number of the $a_{s,i}$ and $b_{s,i}$ are non-zero. The orders are displayed in Table~\ref{table:order}.

\begin{table}[h]\centering
\caption{The orders of the polynomials.}\label{table:order}
\vspace{1mm}

    \begin{tabular}{ | p{0.8cm} | p{2cm} | p{2cm}|}
\hline
$s$&$A_{s}(t)$&$B_{s}(t)$  \\
\hline
   even & $4s$&$4s-3$ \\
   odd & $4s-2$&$4s-1$   \\
    \hline
\end{tabular}
\end{table}

In this manner the coef\/f\/icients are determined recursively. We deduce by considering the dif\/ference of (\ref{recc1}) and (\ref{recc2}) in the case $i=0$ that
\begin{align}
\mu_{s}=(2m+1)b_{s,0}-2a_{s,1}.
\end{align}
In the case that $s$ is even then we start with $i=2s-1$ in  (\ref{recc1}) to determine $b_{s,2s-2}$. Then we take $i=2s-2$ and determine $b_{s,2s-3}$, and so on.
Once every power of $t$ is eliminated, we are left with a constant equation which we must make zero by specifying $\mu_{s}$; in this manner the eigenvalue terms are determined uniquely.
These~$\mu_{s}$ terms are the same as those specif\/ied by the condition~(\ref{cond}), since it is required for solutions $B_{s}(t)$ which grow no faster than polynomials.
Once the coef\/f\/icients in the polynomial expression for $B_{s}(t)$ are determined, and the corresponding~$\mu_{s}$, then the coef\/f\/icients in the polynomial expression for $A_{s}(t)$ can be determined from~(\ref{recc2}).
In the case that $s$ is odd then we have to start with $i=2s$ in~(\ref{recc1}) to determine~$b_{s,2s-1}$.

\subsection*{Returning to the $\boldsymbol{z}$-plane: odd solutions}
In \cite[\S~28.8]{NIST:DLMF} expansions for the odd Mathieu functions are given which exhibit the correct odd behaviour, which an expansion in the form~(\ref{soln}) would not when transformed back into the $z$-plane. We want something similar for the Lam\'{e} functions.

Once we transform our formal expansion back into the $z$-plane, over the whole real line they behave like the even Lam\'{e} functions $\Ec_{\nu}^{m}\left(z,k^2\right)$. We want to construct a similar expansions which when transformed back into the $z$-plane behave like the odd Lam\'{e} functions $\Es_{\nu}^{m+1}\left(z,k^2\right)$.  We need our expansion to have the property
\begin{gather*}
w_{\nu}^{m}\left(-K,k^2\right)=w_{\nu}^{m}\left(K,k^2\right)=0.
\end{gather*}
The Jacobi elliptic function $\cn(z,k)$ is even around the origin and odd around $z=-K$ and $z=K$. Hence again letting $t=\sqrt{2\kappa}\sn(z,k)$ we consider a formal expansion for a solution of~(\ref{Ldiff2}) of the form
\begin{gather*}
w_{\nu}^{m}\left(z,k^2\right)=\cn(z,k)\left(D_{m}(t)\sum_{s=0}^{\infty}\frac{P_{s}(t)}{\kappa^s}+D_{m}'(t)\sum_{s=1}^{\infty}\frac{Q_{s}(t)}{\kappa^{s}}\right).
\end{gather*}
This has the correct behaviour at $z=-K$ and $z=K$. Again we will require that~$P_{s}(t)$ is even and~$Q_{s}(t)$ is odd. By writing
\begin{gather*}
\cn\left(\arcsn\left(\frac{t}{\sqrt{2\kappa}},k\right),k\right)=\sqrt{1-\frac{t^2}{2\kappa}},
\end{gather*}
we can express the formal solution in the form
\begin{gather*}
w^{m}_{\nu}\left(z,k^2\right)
=\sqrt{1-\frac{t^2}{2\kappa}}\left(D_{m}(t)\sum_{s=0}^{\infty}\frac{P_{s}(t)}{\kappa^s}+D_{m}'(t)\sum_{s=1}^{\infty}\frac{Q_{s}(t)}{\kappa^{s}}\right).
\end{gather*}
Expanding this square root, we can rewrite this formal expansion as
\begin{gather*}
w_{\nu}^{m}\left(z,k^2\right)=D_{m}(t)\sum_{s=0}^{\infty}\frac{A_{s}(t)}{\kappa^s}+D_{m}'(t)\sum_{s=1}^{\infty}\frac{B_{s}(t)}{\kappa^{s}},
\end{gather*}
where the connection between $P_{s}(t)$ and $A_{s}(t)$, and $Q_{s}(t)$ and $B_{s}(t)$ is given by
\begin{gather*}
 A_{s}(t)=\sum_{j=0}^{s}\binom{\frac{1}{2}}{j}\left(-\frac{t^2}{2}\right)^{j}P_{s-j}(t),\qquad B_{s}(t)=\sum_{j=0}^{s}\binom{\frac{1}{2}}{j}\left(-\frac{t^2}{2}\right)^{j}Q_{s-j}(t),
\end{gather*}
where $\binom{a}{b}$ is the generalised binomial coef\/f\/icient. Thus we determine the $P_{s}(t)$ and $Q_{s}(t)$ terms by connection with the $A_{s}(t)$ and $B_{s}(t)$ derived previously. By connection with the previous case we get the same eigenvalue expansion to all orders for both~$a_{\nu}^{m}$ and~$b_{\nu}^{m+1}$.

\section{An interlude: error analysis for the eigenvalue expansions}\label{eigenvalues}
In this section we match our eigenvalue expansions with the eigenvalues $a_{\nu}^{m}$ and $b_{\nu}^{m+1}$, and give order estimates for the expansions on truncation. The theory of this subsection is included in~\cite{MS1954}, where it is employed for Mathieu's equation. For this we consider Sturm--Liouville theory; for a fuller treatment on Sturm--Liouville theory see \cite{Sturm08}. We have the dif\/ferential equation
\begin{gather*}
\frac{d}{dx}\left[p(x)\frac{dy}{dx}\right]+\left(\lambda w(x)-q(x)\right)y=0
\end{gather*}
and we consider $p(x),w(x) >0$, and $p(x)$, $p'(x)$, $q(x)$ and $w(x)$ to be continuous functions over a f\/inite real interval~$[a,b]$. When~$(a,b)$ is bounded and $p(x)$ does not vanish on $[a,b]$, this is a regular SL problem, otherwise it is singular. Consider the regular problem. We want to f\/ind special values of~$\lambda$ called eigenvalues for which there exists a non-trivial solution satisfying the separated boundary conditions
\begin{alignat}{3}
& \alpha_{1}y(a)+\alpha_{2}y'(a)=0, \qquad && \alpha_{1}^2+\alpha_{2}^2>0, & \nonumber \\
& \beta_{1}y(b)+\beta_{2}y'(b) =0, \qquad && \beta_{1}^2+\beta_{2}^2>0. & \label{bcs}
\end{alignat}
The regular SL theory states that these eigenvalues are real and can be ordered such that
\begin{gather*}
\lambda_{0}<\lambda_{1}<\cdots <\lambda_{\nu}<\cdots \to\infty
\end{gather*}
and these eigenvalues $\lambda_{\nu}$ correspond to unique eigenfunctions $y_{\nu}(x)$ with exactly $\nu$ zeros in $(a,b)$. Normalised eigenfunctions form an orthonormal basis such that
\begin{gather*}
(y_{n},y_{m})=\int_{a}^{b}y_{n}(x)y_{m}(x)w(x)dx=\delta_{mn}
\end{gather*}
in the Hilbert space $L^{2}\left[[a,b],w(x)dx\right]$.

The following theorem is Theorem~1 in \cite[\S~1.52]{MS1954}
\begin{Theorem}
Consider $SL$ to be a self-adjoint differential operator defined on a subspace~$A$ of $L^{2}\left[[a,b],w(x)dx\right]$ containing all the twice differentiable functions which satisfy the boundary conditions~\eqref{bcs} and corresponding~${\lambda}_{\nu}$ such that
\begin{gather*}
({\rm SL}+{\lambda}_{\nu})y_{\nu}(x)=0.
\end{gather*}
Let $\widetilde{y}(x)\in A$ be such that $(\widetilde{y},\widetilde{y})=1$. Now define a remainder function $R(x)$ such that
\begin{gather*}
({\rm SL}+{\lambda})\widetilde{y}(x)=R(x)
\end{gather*}
for some constant ${\lambda}$. If $R(x)\in L^{2}\left[[a,b],w(x)dx\right]$ then
\begin{gather*}
\min_{k}|{\lambda}-{\lambda}_{k}|<\sqrt{(R,R)}.
\end{gather*}
\end{Theorem}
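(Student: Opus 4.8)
The plan is to use the two facts already established: that the normalised eigenfunctions $\{y_{\nu}\}$ form an orthonormal basis of the Hilbert space $L^{2}\left[[a,b],w(x)dx\right]$, and that $\mathrm{SL}$ is self-adjoint on the subspace $A$. The entire argument is then Parseval's identity applied twice, with self-adjointness used to compute the Fourier coefficients of $R$ directly, so that one never has to differentiate an infinite series term by term.

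First I would expand the trial function in the eigenbasis, writing $c_{k}=(\widetilde{y},y_{k})$, so that completeness together with the assumption $(\widetilde{y},\widetilde{y})=1$ gives
\begin{gather*}
\widetilde{y}=\sum_{k}c_{k}y_{k},\qquad \sum_{k}c_{k}^{2}=(\widetilde{y},\widetilde{y})=1.
\end{gather*}
Since $R\in L^{2}\left[[a,b],w(x)dx\right]$ by hypothesis, it too admits an expansion in this basis with coefficients $r_{k}=(R,y_{k})$, and Parseval gives $(R,R)=\sum_{k}r_{k}^{2}$.

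The \emph{key step} is the evaluation of $r_{k}$. Rather than applying $\mathrm{SL}$ to the series for $\widetilde{y}$, I would transfer the operator onto the eigenfunction by self-adjointness, which is legitimate because both $\widetilde{y}$ and $y_{k}$ lie in $A$ and hence satisfy the boundary conditions~(\ref{bcs}). Using $(\mathrm{SL}+\lambda_{k})y_{k}=0$, that is $(\mathrm{SL}+\lambda)y_{k}=(\lambda-\lambda_{k})y_{k}$, one obtains
\begin{gather*}
r_{k}=\big((\mathrm{SL}+\lambda)\widetilde{y},y_{k}\big)=\big(\widetilde{y},(\mathrm{SL}+\lambda)y_{k}\big)=(\lambda-\lambda_{k})(\widetilde{y},y_{k})=(\lambda-\lambda_{k})c_{k}.
\end{gather*}
Substituting into Parseval's identity for $R$ and bounding each factor $(\lambda-\lambda_{k})^{2}$ below by its infimum over $k$ yields
\begin{gather*}
(R,R)=\sum_{k}(\lambda-\lambda_{k})^{2}c_{k}^{2}\ge \big(\min_{k}|\lambda-\lambda_{k}|\big)^{2}\sum_{k}c_{k}^{2}=\big(\min_{k}|\lambda-\lambda_{k}|\big)^{2},
\end{gather*}
and taking square roots gives the stated bound.

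The one delicate point is the strictness of the inequality. The Parseval computation delivers $\min_{k}|\lambda-\lambda_{k}|\le\sqrt{(R,R)}$ cleanly; equality can in fact occur, precisely when every index $k$ with $c_{k}\neq 0$ is equidistant from $\lambda$ (for example when $\widetilde{y}$ is itself an eigenfunction, forcing $R=0$). I expect this to be the only thing requiring care: one either excludes such degenerate trial functions in the intended application, or reads the conclusion as the non-strict inequality, which is all that is needed to produce the order estimates for the truncated eigenvalue expansions later in this section.
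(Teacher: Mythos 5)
Your proof is correct and is essentially identical to the paper's: both rest on the identity $(R,y_{k})=(\lambda-\lambda_{k})(\widetilde{y},y_{k})$ obtained from self-adjointness together with $(\mathrm{SL}+\lambda_{k})y_{k}=0$, followed by Parseval's identity, the only difference being that the paper divides by $|\lambda-\lambda_{k}|^{2}$ where you multiply. Your remark about strictness is apt --- the paper asserts the strict inequality without addressing the degenerate case where all the weight of $\widetilde{y}$ sits at indices equidistant from $\lambda$ --- and, as you say, the non-strict bound is all that is needed for the eigenvalue estimates that follow.
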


\begin{proof}
We have that
\begin{gather*}
(R,y_{k})=((SL+{\lambda})\widetilde{y},y_{k})=(\widetilde{y},SLy_{k})+{\lambda}(\widetilde{y},y_{k})=({\lambda}-{\lambda}_{k})(\widetilde{y},y_{k}),
\end{gather*}
and using this in
\begin{gather*}
1=(\widetilde{y},\widetilde{y})=\sum_{k}|(\widetilde{y},y_{k})|^2=\sum_{k}\frac{|(R,y_{k})|^2}{|{\lambda}-{\lambda}_{k}|^2}
<\frac{(R,R)}{\min_{k}|{\lambda}-{\lambda}_{k}|^2},
\end{gather*}
we have that
\begin{gather*}
\min_{k}|{\lambda}-{\lambda}_{k}|<\sqrt{(R,R)}.\tag*{\qed}
\end{gather*}
\renewcommand{\qed}{}
\end{proof}

\subsection*{Eigenvalues of Lam\'{e}'s equation}
We now apply this theory to Lam\'{e}'s equation to obtain order estimates upon truncation of asymptotic expansions of the eigenvalues~$a_{\nu}^{m}$ and~$b_{\nu}^{m+1}$.

Special eigenvalues $a_{\nu}^{m}$ correspond to non-trivial solutions satisfying the separated boundary conditions
\begin{gather*}
\frac{d}{dz}\Ec_{\nu}^{m}\left(z,k^2\right)\bigg|_{z=-K}=\frac{d}{dz}\Ec_{\nu}^{m}\left(z,k^2\right)\bigg|_{z=K}=0
\end{gather*}
and these eigenvalues are real and can be ordered such that
\begin{gather*}
a^{0}_{\nu}<a^{1}_{\nu}<\cdots <a^{m}_{\nu}<\cdots \to\infty.
\end{gather*}
The functions $\Ec_{\nu}^{m}\left(z,k^2\right)$ have exactly $m$ zeros in $(-K,K)$. The normalised eigenfunctions $w_{\nu}^{m}\left(z,k^2\right)$ form an orthonormal basis such that
\begin{gather*}
\left(w_{\nu}^{i},w_{\nu}^{j}\right)=\int_{-K}^{K}w_{\nu}^{i}\left(z,k^2\right)w_{\nu}^{j}\left(z,k^2\right)dz=\delta_{ij}
\end{gather*}
in the Hilbert space $L^{2}\left[[-K,K],dz\right]$.

Let $L_{\nu}$ be the operator
\begin{gather*}
L_{\nu}:=\frac{d^2}{dz^2}-\kappa^2\sn^2(z,k)
\end{gather*}
so that
\begin{gather*}
\left(L_{\nu}+{a}_{\nu}^{m}\right)w_{\nu}^{m}\left(z,k^2\right)=0.
\end{gather*}
Letting $t=\sqrt{2\kappa}\sn(z,k)$, we def\/ine the truncated expansions corresponding to $w_{\nu}^{m}\left(z,k^2\right)$ as
\begin{gather}\label{wmn}
w_{\nu,n}^{m}\left(z,k^2\right) =c_{\nu,n}^{m}\left(D_{m}(t)\sum_{s=0}^{n}\frac{A_{s}(t)}{\kappa^s}+D_{m}'(t)\sum_{s=0}^{n}\frac{B_{s}(t)}{\kappa^{s}}\right),
\end{gather}
where the $A_{s}(t)$ and $B_{s}(t)$ terms were derived previously, and $c_{\nu,n}^{m}$ is def\/ined to be a function of~$\kappa$ so that
\begin{gather}\label{cnorm}
\int_{-K}^{K}\left(w_{\nu,n}^{m}\left(z,k^2\right)\right)^2dz=1.
\end{gather}
Then we can write the derivative of $w_{\nu,n}^{m}\left(z,k^2\right)$ with respect to $z$ as
\begin{gather*}
\frac{dw_{\nu,n}^{m}\left(z,k^2\right)}{dz}=c_{\nu,n}^{m}\sqrt{2\kappa}\cn(z,k)\dn(z,k) \\
\hphantom{\frac{dw_{\nu,n}^{m}\left(z,k^2\right)}{dz}=}{}
\times\left(D_{m}(t)\sum_{s=0}^{n}
\frac{{A}'_{s}(t)+\left(\frac{t^2}{4}-m-\frac{1}{2}\right)B_{s}(t)}{\kappa^s}
 +D'_{m}(t)\sum_{s=0}^{n} \frac{A_{s}(t)+{B}'_{s}(t)}{\kappa^s}\right),
\end{gather*}
where the dash represents dif\/ferentiation with respect to $t$.
Thus clearly $w_{\nu,n}^{m}\left(z,k^2\right)\in A$ since $\cn(-K,k)=\cn(K,k)=0$. We also def\/ine the truncated eigenvalue expansions
\begin{gather*}
a_{\nu,n}^{m} =(2m+1)\kappa+2\sum_{s=0}^{n-1}\frac{\mu_{s+1}}{\kappa^{s}},
\end{gather*}
where the $\mu_{s}$ were derived previously and def\/ine $R_{\nu,n}^{m}\left(z,k^2\right)$ such that
\begin{gather*}
\left(L_{\nu}+{a}_{\nu,n}^{m}\right)w_{\nu,n}^{m}\left(z,k^2\right)=R_{\nu,n}^{m}\left(z,k^2\right).
\end{gather*}
We consider the operator $L_{\nu}$ acting on $D_{m}(t)$ and derive
\begin{gather}\nonumber
L_{\nu}\left(D_{m}(t)\right)= \left((2m+1)\left(\frac{1+k^2}{2}t^2-\kappa\right)
-\left(k^2(2m+1)+\kappa\left(1+k^2\right)\right)\frac{t^4}{4\kappa}+\frac{k^2t^6}{8\kappa}\right)D_{m}(t)\\
 \label{Lop}
\hphantom{L_{\nu}\left(D_{m}(t)\right)=}{}
+\left(-\left(1+k^2\right)t+\frac{k^2 t^3}{\kappa}\right)D_{m}'(t).
\end{gather}
Using the recurrence relations
\begin{gather}\label{recc}
tD_{m}(t)=D_{m+1}(t)+mD_{m-1}(t),\qquad
D'_{m}(t)=mD_{m-1}(t)-\ifrac{1}{2}tD_{m}(t),
\end{gather}
we can rewrite $w_{\nu,n}^{m}\left(z,k^2\right)$ given in~(\ref{wmn}) such that for $s\in \{0,\dots ,n \}$, $A_{s}(t)D_{m}(t)$ and $B_{s}(t)D_{m}'(t)$ are sums of varying orders of parabolic cylinder functions with constant coef\/f\/icients dependent only on~$m$ and~$k$; it then follows from~(\ref{Lop}) that with the solution rewritten in this form, we have
\begin{gather*}
R_{\nu,n}^{m}\left(z,k^2\right)=c_{\nu,n}^{m}\kappa^{-n}\left[ (\cdots )+\kappa^{-1} (\cdots )+\cdots\right],
\end{gather*}
where the terms inside the brackets are sums of varying orders of parabolic cylinder functions with constant coef\/f\/icients dependent only on $k$ and $m$. Note that this remainder term will be f\/inite sum and clearly $R_{\nu,n}^{m}\left(z,k^2\right)\in L^{2}\left[[-K,K],dz\right]$. As $w_{\nu,n}^{m}\left(z,k^2\right)\in A$ we have that
\begin{gather*}
\min_{k}\big|{a}_{\nu,n}^{m}-{a}_{\nu}^{k}\big|^2< \sqrt{\left(R_{\nu,n}^{m},R_{\nu,n}^{m}\right)}.
\end{gather*}
In Part~\ref{partone}, Section~\ref{interlude}, we proved that
\begin{gather*}
a_{\nu}^{m}=(2m+1)\kappa+\mathcal{O}(1) \qquad \textrm{as} \quad \kappa\to\infty,
\end{gather*}
hence for $\kappa$ large enough, necessarily
\begin{gather*}
\min_{k}\big|{a}_{\nu,n}^{m}-{a}_{\nu}^{k}\big|^2=\big|{a}_{\nu,n}^{m}-{a}_{\nu}^{m}\big|^2
\end{gather*}
and thus
\begin{gather*}
\big|{a}_{\nu,n}^{m}-{a}_{\nu}^{m}\big| < \sqrt{\left(R_{\nu,n}^{m},R_{\nu,n}^{m}\right)}.
\end{gather*}

We need an order estimate for ${\left(R_{\nu,n}^{m},R_{\nu,n}^{m}\right)}$. The integrals we must consider then are of the form
\begin{gather*}
I=\int_{-K}^{K}D_{i}\big(\sqrt{2\kappa}\sn(z,k)\big)D_{j}\big(\sqrt{2\kappa}\sn(z,k)\big)dz,
\end{gather*}
where $i,j\in\N_{0}$. Performing the substitution $t=\sqrt{2\kappa}\sn(z,k)$ we obtain
\begin{gather*}
I=\frac{1}{\sqrt{2\kappa}}\int_{-\sqrt{2\kappa}}^{\sqrt{2\kappa}}\frac{1}{\sqrt{1-\frac{t^2}{2\kappa}}\sqrt{1-\frac{k^2t^2}{2\kappa}}}D_{i}(t)D_{j}(t)dt.
\end{gather*}

Since when $t$ is large $D_{m}$ is exponentially small it follows that
\begin{gather*}
I=\mathcal{O}\big(\kappa^{-\frac{1}{2}}\big), \qquad \textrm{as} \quad \kappa\to\infty.
\end{gather*}

Considering the f\/irst term of the expansion for $w_{\nu,n}^{m}\left(z,k^2\right)$ in~(\ref{wmn}) we have
\begin{gather*}
\left(c_{\nu,n}^{m}\right)^2\int_{-K}^{K}D_{m}^2\big(\sqrt{2\kappa}\sn(z,k)\big) dz
 =\frac{\left(c_{\nu,n}^{m}\right)^2}{\sqrt{2\kappa}}\int_{-\sqrt{2\kappa}}^{\sqrt{2\kappa}}
\frac{1}{\sqrt{1-\frac{t^2}{2\kappa}}\sqrt{1-\frac{k^2t^2}{2\kappa}}}D_{m}^2(t) dt \\
\hphantom{\left(c_{\nu,n}^{m}\right)^2\int_{-K}^{K}D_{m}^2\big(\sqrt{2\kappa}\sn(z,k\big)) dz}{}
 \sim \frac{\left(c_{\nu,n}^{m}\right)^2}{\sqrt{2\kappa}}\int_{-\infty}^{\infty}D_{m}^2(t)\,dt
=\left(c_{\nu,n}^{m}\right)^2m!\sqrt{\frac{\pi}{\kappa}},
\end{gather*}
as $\kappa\to\infty$. Hence it follows from (\ref{cnorm}) that $c_{m,n}=\mathcal{O}(\kappa^{1/4})$ as $\kappa\to\infty$. Similar observations would give us that			
\begin{gather*}
\int_{-K}^{K}\left(R_{m,n}(z)\right)^2dz=\mathcal{O}\left(\kappa^{-2n}\right),
\end{gather*}
and so
\begin{gather*}
{a}_{\nu}^{m}-{a}_{\nu,n}^{m}= \mathcal{O}\left(\kappa^{-n}\right),
\end{gather*}
as $\kappa\to\infty$. The error analysis for $b_{\nu,n}^{m}$ would in the same manner give the order estimate
\begin{gather*}
{b}_{\nu}^{m+1}-{b}_{\nu,n}^{m+1}= \mathcal{O}\left(\kappa^{-n}\right) \qquad \textrm{as} \quad \kappa\to\infty.
\end{gather*}
Hence we have from (\ref{LLEigen2}) that
\begin{gather*}
a_{\nu}^{m}=(2m+1)\kappa+2\sum_{s=0}^{n-1}\frac{\mu_{s+1}}{\kappa^s}+\mathcal{O}\left(\kappa^{-n}\right).
\end{gather*}
In the same manner we would deduce that
\begin{gather*}
b_{\nu}^{m+1}=(2m+1)\kappa+2\sum_{s=0}^{n-1}\frac{\mu_{s+1}}{\kappa^s}+\mathcal{O}\left(\kappa^{-n}\right).
\end{gather*}
This result also follows from the dif\/ference between $a_{\nu}^{m}$ and $b_{\nu}^{m+1}$ being exponentially small as $\kappa\to\infty$ (see \cite[\S~28.8]{NIST:DLMF}).

\section[Error analysis for the uniform asymptotic expansions of the Lam\'{e} functions]{Error analysis for the uniform asymptotic expansions\\ of the Lam\'{e} functions}\label{expansionerror}

We now use the results of Section~\ref{eigenvalues} to obtain strict and realistic error bounds for the functions expansions derived in Section~\ref{expansion}.
Def\/ine the dif\/ferential operator
\begin{gather*}
L_{\nu} :=\frac{d^2}{dt^2}+m+\frac{1}{2}-\frac{t^2}{4}\\
\hphantom{L_{\nu} :=}{}
 +\frac{1}{2\kappa}\left(\left(-t^2\left(1+k^2\right)+\frac{k^2t^4}{2\kappa}\right)
\frac{d^2}{dt^2}-t\left(1+k^2-\frac{k^2t^2}{\kappa}\right)\frac{d}{dt}+h-\kappa (2m+1 )\right),
\end{gather*}
and consider $t\in(-t_{*},t_{*})$. We have the truncated expansion corresponding to an even solution
\begin{gather*}
w_{\nu,n}^{m}\left(t,k^2\right)=D_{m}(t)\sum_{s=0}^{n}\frac{A_{s}(t)}{\kappa^s}+D_{m}'(t)\sum_{s=1}^{n}\frac{B_{s}(t)}{\kappa^{s}},
\end{gather*}
such that we have the exact solution
\begin{gather}\label{tsoln}
w_{\nu}^{m}\left(t,k^2\right)=w_{\nu,n}^{m}\left(t,k^2\right)+ \epsilon_{\nu,n}^{m}\left(t,k^2\right).
\end{gather}
We def\/ine the remainder term $R_{\nu,n}^{m}\left(t,k^2\right)$ such that
\begin{gather*}
L_{\nu}\left(w_{\nu,n}^{m}\left(t,k^2\right)\right)=R_{\nu,n}^{m}\left(t,k^2\right)
\end{gather*}
and split the eigenvalue such that
\begin{gather*}
\frac{h}{2\kappa}=m+\ifrac{1}{2}+\sum_{s=1}^{n}\frac{\mu_{s}}{\kappa^s}+\frac{\widetilde{\mu}_{n+1}}{\kappa^{n+1}},
\end{gather*}
and note we just proved that $\widetilde{\mu}_{n+1}=\mathcal{O}(1)$ as $\kappa\to\infty$.
Since the coef\/f\/icients $A_s(t)$ and $B_s(t)$ satisfy~(\ref{rec1}) and~(\ref{rec2}) it follows that
\begin{gather*}
R_{\nu,n}^{m}\left(t,k^2\right)=\mathcal{O}\left(\kappa^{-n-1}\right),
\end{gather*}
as $\kappa\to\infty$. Applying $L_{\nu}$ to~(\ref{tsoln}) we obtain
\begin{gather*}
 \left(\epsilon_{\nu,n}^{m}\right)''+\left(m+\frac{1}{2}-\frac{t^2}{4}\right)\epsilon_{\nu,n}^{m}=
\frac{1}{1-\frac{t^2}{2\kappa}\left(1+k^2-\frac{k^2t^2}{2\kappa}\right)}\biggl[
\frac{t}{2\kappa}\left(1+k^2-\frac{k^2t^2}{\kappa}\right)\left(\epsilon_{\nu,n}^{m}\right)' \\
 \qquad{}  +\left(t^2\left(-1-k^2+\frac{k^2t^2}{2\kappa}\right)\left(2m+1-\frac{t^2}{2}\right)
-h+\kappa\left(2m+1\right)\right)\frac{\epsilon_{\nu,n}^{m}}{2\kappa}-R_{\nu,n}^{m}\biggr],
\end{gather*}
and denoting the right hand side of this equation $\Omega_{\nu,n}^{m}\left(t,k^2\right)$, by use of variation of parameters we have
\begin{gather*}
\epsilon_{\nu,n}^{m}\left(t,k^2\right)=\frac{\sqrt{\pi/2}}{m!}\int_{t}^{t_{*}}
\left[D_{m}(t)\overline{D}_{m}(\tau)-D_{m}(\tau)\overline{D}_{m}(t)\right]\Omega_{\nu,n}^{m}\left(\tau,k^2\right)d\tau.
\end{gather*}

In accordance with \cite[\S~6.10.2]{Olv97} we def\/ine $J(\tau)=1$, $H(\tau)=1+k^2-\frac{k^2\tau^2}{2\kappa}$ and
\begin{gather*}
K\left(t,\tau\right)=\frac{\sqrt{\pi/2}}{m!}\left(D_{m}(t)\overline{D}_{m}(\tau)-D_{m}(\tau)\overline{D}_{m}(t)\right),\qquad
\phi(\tau)=\frac{-R_{\nu,n}^{m}\left(\tau,k^2\right)}{1-\frac{\tau^2}{2\kappa}H(\tau)},\\
 \psi_{0}(\tau)=\frac{\kappa\left(2m+1\right)-h-\tau^2H(\tau)\left(2m+1-\frac{\tau^2}{2}\right)
}{2\kappa-\tau^2H(\tau)},\qquad \psi_{1}(\tau)=\frac{\tau\left(1+k^2-\frac{k^2\tau^2}{\kappa}\right)}{2\kappa-\tau^2H(\tau)},
\\
\Phi(t)=\int_{t}^{t_{*}}\left|\phi(\tau)\,d\tau\right|,\qquad
\Psi_{0}(t)=\int_{t}^{t_{*}} \left|\psi_0(\tau)\,d\tau\right|,\qquad
\Psi_{1}(t)=\int_{t}^{t_{*}} \left|\psi_1(\tau)\,d\tau\right|.
\end{gather*}
Since we consider $t\in(-t_{*},t_{*})$ where $t_{*}=\mathcal{O}(1)$ as $\kappa\to\infty$, the error analysis is much simpler than the analysis Olver uses in \cite{Olver1975} as we have
\begin{gather*}
|K\left(t,\tau\right)| \leq k_{0}, \qquad \textrm{and} \qquad |\partial K\left(t,\tau\right)/\partial t| \leq k_{1},
\end{gather*}
where $k_{0}$ and $k_{1}$ are $\mathcal{O}(1)$ as $\kappa\to\infty$. Thus again in accordance with \cite[\S~6.10.2]{Olv97} we def\/ine
\begin{gather}\label{psandqs}
P_{0}(t)=k_{0},\qquad
Q(\tau)=1,\qquad
P_{1}(t)= k_{1}
\end{gather}
(we do not def\/ine $P_{2}(t)$ as we do not need to bound $|\partial^2 K (t,\tau )/\partial t^2|$ to carry out our analysis),
and f\/inally the constants
\begin{gather} \label{kappas}
\widetilde{\kappa}=1,\qquad
\widetilde{\kappa}_{0}=k_{0},\qquad
\widetilde{\kappa}_{1}=k_{1}.
\end{gather}
Hence it follows from Theorem 10.1 in \cite[\S ~6.10.2]{Olv97} that
\begin{gather}\label{error}
\big|\epsilon_{\nu,n}^{m}\big(t,k^2\big)\big| \leq P_{0}(t) \widetilde{\kappa}   \Phi(t)\exp\big[\widetilde{\kappa}_{0} \Psi_{0}(t)+\widetilde{\kappa}_{1}  \Psi_{1}(t)\big].
\end{gather}
Since
\begin{gather*}
h-\kappa(2m+1)=\mathcal{O}(1)
\end{gather*}
for both $h=a_{\nu}^{m}$ and $h=b_{\nu}^{m+1}$ we obtain
\begin{gather} \label{psis}
\Phi(t)=\mathcal{O}\left(\kappa^{-n-1}\right),\qquad
\Psi_{0}(t)=\mathcal{O}\left(\kappa^{-1}\right),\qquad
\Psi_{1}(t)=\mathcal{O}\left(\kappa^{-1}\right),
\end{gather}
as $\kappa\to\infty$.
Then substituting expressions from (\ref{psis}), (\ref{kappas}) and the f\/irst of (\ref{psandqs}) into (\ref{error}) we have
\begin{gather*}
\epsilon_{\nu,n}^{m}\left(t,k^2\right)=\mathcal{O}\left(\kappa^{-n-1}\right) \qquad \textrm{as} \quad \kappa\to\infty, \quad \textrm{for} \quad t\in\left(-t_{*},t_{*}\right).
\end{gather*}

\section{Identif\/ication of solutions}\label{identify2}
Now we identify the solutions derived in Section~\ref{expansionerror} with the Lam\'{e} functions. We give the identif\/ication for $t\in(-t_{*},t_{*})$
\begin{gather*}
\Ec_{\nu}^{m}\left(z,k^2\right) =C_{\nu}^{m}\biggl(D_{m}(t)\sum_{s=0}^{n}\frac{A_{s}(t)}{\kappa^s}+D_{m}'(t)\sum_{s=0}^{n}\frac{B_{s}(t)}{\kappa^{s}} \nonumber\\ 
\hphantom{\Ec_{\nu}^{m}\left(z,k^2\right) =}{}
+\ifrac{1}{2}\left(\epsilon_{\nu,n}^{m}\left(t,k^2\right)+(-1)^{m}\epsilon_{\nu,n}^{m}\left(-t,k^2\right)\right)\biggr),\\ \nonumber
\Es_{\nu}^{m+1}\left(z,k^2\right)= S_{\nu}^{m+1}\sqrt{1-\frac{t^2}{2\kappa}}\biggl(D_{m}(t)
\sum_{s=0}^{n}\frac{P_{s}(t)}{\kappa^s}+D_{m}'(t)\sum_{s=0}^{n}\frac{Q_{s}(t)}{\kappa^{s}}
 \\
\hphantom{\Es_{\nu}^{m+1}\left(z,k^2\right)= }{} +\ifrac{1}{2}\left(\epsilon_{\nu,n}^{m}\left(t,k^2\right)+(-1)^{m}\epsilon_{\nu,n}^{m}\left(-t,k^2\right)\right)\biggr), 
\end{gather*}
where the errors are def\/ined according to $h=a_{\nu}^{m}$ or $h=b_{\nu}^{m+1}$ respectively.
We consider now just $C_{\nu}^{m}$ since we will obtain the same asymptotic expansion from $S_{\nu}^{m+1}$ by construction. To obtain an asymptotic expansion for these constants we consider with respect to~(\ref{LNorm}) the integral
\begin{gather*}
\left(C_{\nu}^{m}\right)^2\int_{-K}^{K}\dn(z,k)\left(D_{m}(t)
\sum_{s=0}^{\infty}\frac{A_{s}(t)}{\kappa^s}+D_{m}'(t)\sum_{s=0}^{\infty}\frac{B_{s}(t)}{\kappa^{s}}\right)^2dz.
\end{gather*}
In the integral we let $t=\sqrt{2\kappa}\sn(z,k)$ and obtain
\begin{gather*}
\frac{\left(C_{\nu}^{m}\right)^2}{\sqrt{2\kappa}}\int_{-\sqrt{2\kappa}}^{\sqrt{2\kappa}}
\frac{1}{\sqrt{1-\frac{t^2}{2\kappa}}}\left(D_{m}(t)\sum_{s=0}^{\infty}\frac{A_{s}(t)}{\kappa^s}+D_{m}'(t)
\sum_{s=0}^{\infty}\frac{B_{s}(t)}{\kappa^{s}}\right)^2dt.
\end{gather*}
Since the parabolic cylinder functions are exponentially small when the variable is large, we consider the integral from $-\infty$ to $\infty$ and express the integral in the form
\begin{gather*}
 \frac{\left(C_{\nu}^{m}\right)^2}{\sqrt{2\kappa}}\int_{-\infty}^{\infty}\sum_{s=0}^{\infty}\kappa^{-s}\sum_{j=0}^{s}\binom{-\frac{1}{2}}{j}
 \left(t^2/2\right)^{j}\sum_{i=0}^{s-j}
\biggl(A_{i}(t)A_{s-j-i}(t)D_{m}^2(t) \\
\qquad{}+2A_{i}(t)B_{s-j-i}(t)D_{m}(t)D_{m}'(t)+B_{i}(t)B_{s-j-i}(t)\left(D_{m}'(t)\right)^2\biggr) dt.
\end{gather*}
Then from (\ref{LNorm}) we have the formal asymptotic expansions for the normalisation constants
\begin{gather*}\left.
\begin{array}{@{}l@{}}
C_{\nu}^{m} \\
S_{\nu}^{m+1}
\end{array} \right\}
\sim\frac{(\pi \kappa)^{1/4}}{\sqrt{2m!}}\left(1+\sum_{s=1}^{\infty}\frac{\eta_{s}}{\kappa^s}\right)^{-1/2}.
\end{gather*}

To obtain analytic expressions for the $\eta_{s}$ terms we need expressions for integrals of the form
\begin{gather*}
p(m,n) = \int_{-\infty}^{\infty} t^nD^2_{m}(t)dt,
\qquad q(m,n)=\int_{-\infty}^{\infty} t^{n}\left(D_{m}'(t)\right)^2dt, \\
r(m,n) =\int_{-\infty}^{\infty} t^{n+1}D_{m}(t)D_{m}'(t)dt.
\end{gather*}
We have the identities $p(m,2n+1)=q(m,2n+1)=r(m,2n+1)=0$ for all $m$, $n$, and
\begin{gather*}
p(m,0)=\int_{-\infty}^{\infty} D^2_{m}(t)dt=m!\sqrt{2\pi} \qquad \mathrm{and} \qquad \int_{-\infty}^{\infty} D_{m}(t)D_{n}(t)dt=0 \qquad \mathrm{for} \quad m\neq n,
\end{gather*}
and using (\ref{recc}) we deduce the expression
$p(m,2)= (2m+1 )m!\sqrt{2\pi}$.
Using integration by parts we obtain the recurrence relation for~$p(m,n)$
\begin{gather*}
p(m,n)
=\frac{2 (n-1 ) (2m+1 )}{n} p(m,n-2)+\frac{ (n-3 ) (n-2 ) (n-1 )}{n}p(m,n-4).
\end{gather*}
We know $p (m,0 )$ and $p (m,2 )$ thus consequently can determine $p(m,n)$ for any $n$ recursively. Similarly using integration by parts we obtain expressions for the other integrals in terms of $p(m,n)$:
\begin{gather*}
q(m,n) =\frac{n+3}{4(n+1)} p(m,n+2)-\left(m+\tfrac{1}{2}\right) p(m,n),\nonumber\\
r(m,n) =\frac{2m+1}{n+2} p(m,n+2)-\frac{n+4}{2(n+2)(n+3)}p(m,n+4).
\end{gather*}

\section[Summary of results for the Lam\'{e} functions and their respective eigenvalues]{Summary of results for the Lam\'{e} functions\\ and their respective eigenvalues}\label{summary2}

Let $\kappa=k\sqrt{\nu(\nu+1)}$ and $t=\sqrt{2\kappa}\sn{(z,k)}$. Then for $m$ a non-negative integer and $z=\mathcal{O}\big(\kappa^{-1/2}\big)$ as $\kappa\to\infty$
\begin{gather*}
\Ec_{\nu}^{m}\left(z,k^2\right)= C_{\nu}^{m}\left(D_{m}(t)\sum_{s=0}^{n}\frac{A_{s}(t)}{\kappa^s}+D_{m}'(t)
\sum_{s=0}^{n}\frac{B_{s}(t)}{\kappa^{s}}+\mathcal{O}\left(\kappa^{-n-1}\right)\right),\\
\frac{\Es_{\nu}^{m+1}\left(z,k^2\right)}{\cn(z,k)}= S_{\nu}^{m+1}\left(D_{m}(t)
\sum_{s=0}^{n}\frac{P_{s}(t)}{\kappa^s}+D_{m}'(t)\sum_{s=0}^{n}\frac{Q_{s}(t)}{\kappa^{s}}+\mathcal{O}\left(\kappa^{-n-1}\right)\right),
\end{gather*}
where
\begin{gather*}\left.
\begin{array}{@{}l@{}}
C_{\nu}^{m} \\
S_{\nu}^{m+1}
\end{array}\right\}
\sim \frac{(\pi \kappa)^{1/4}}{\sqrt{2m!}}\left(1+\sum_{s=1}^{\infty}\frac{\eta_{s}}{\kappa^s}\right)^{-1/2}.
\end{gather*}
Both $A_{s}(t)$ and $P_{s}(t)$ are even polynomials, and both  $B_{s}(t)$ and $Q_{s}(t)$ are odd polynomials. These polynomials are found recursively and we give here the f\/irst two terms:
\begin{gather*}
A_{0}=1, \qquad  A_{1}=\frac{k^2+1}{32}t^2,\qquad B_{0}=0, \qquad B_{1}=\frac{k^2+1}{16}\left(t^3-(2m+1)t\right),\\
P_{0}=1, \qquad P_{1}=\frac{k^2+9}{32}t^2,\qquad Q_{0}=0, \qquad Q_{1}=B_{1},\qquad \eta_{1}=\frac{3-k^2}{16}(2m+1).
\end{gather*}

Correspondingly we have the eigenvalue expansion as $\kappa\to\infty$
\begin{gather*}\left.
 \begin{array}{@{}l@{}}
a_{\nu}^{m} \\
b_{\nu}^{m+1}
\end{array}\right\}
=(2m+1)\kappa+2\sum_{s=0}^{n-1}\frac{\mu_{s+1}}{\kappa^s}+\mathcal{O}\left(\kappa^{-n}\right), \qquad \textrm{as} \quad \kappa\to\infty.
\end{gather*}
where the order term will be dif\/ferent in both cases. The $\mu_{s}$ terms are constant coef\/f\/icients which depend on~$k$ and~$m$, found recursively with the eigenfunction expansions. We give here the f\/irst two terms:
\begin{gather*}
\mu_{1}=-\frac{k^2+1}{8}\left(1+2m+2m^2\right),\qquad
\mu_{2}=-\frac{2m+1}{32}\left(\left(k^2-1\right)^2\left(1+m+m^2\right)-2k^2\right).
\end{gather*}

These eigenvalue coef\/f\/icients match the formal results given in  \cite[\S~29.7]{NIST:DLMF}.

\section{A special case: Mathieu functions}\label{specialmathieu2}

\subsection*{Mathieu's equation}

Similarly to the Lam\'{e} case, if one considers the two term uniform approximation for the Mathieu functions you would observe that the oscillatory behaviour of the Mathieu functions happen in a
shrinking neighbourhood of the $z=\frac{\pi}{2}$ as $h\to\infty$. It can be shown that in a shrinking neighbourhood of this point, $\zeta$ behaves approximately like~$\cos(z)$.
Thus the variable in the parabolic cylinder function around this point behaves approximately like~$\sqrt{2}h\cos(z)$. This would motivate a much simpler transformation.

Letting $t=2\sqrt{h}\cos{z}$ in (\ref{MDiff1}) we present Mathieu's equation in the algebraic form
\begin{gather*}
\frac{d^2w}{dt^2}+\left(\frac{\lambda+2h^2}{4h}-\frac{t^2}{4}\right)w
-\frac{1}{4h}\left(t^2\frac{d^2}{dt^2}+t\frac{d}{dt}\right)w=0,
\end{gather*}
and since we proved that
\begin{gather*}
\frac{\lambda+2h^2}{4}-h\left(m+\ifrac{1}{2}\right)=\mathcal{O}(1), \qquad \textrm{as} \quad  h\to\infty
\end{gather*}
for the special eigenvalues $\lambda$ which emit the Mathieu functions we have Mathieu's equation in the form
 \begin{gather*}
\frac{d^2w}{dt^2}+\left(m+\frac{1}{2}-\frac{t^2}{4}\right)w+\frac{1}{h}
\left(-\frac{t^2}{4}\frac{d^2}{dt^2}-\frac{t}{4}\frac{d}{dt}+\frac{\lambda+2h^2}{4}-h\left(m+\frac{1}{2}\right)\right)w=0.
\end{gather*}
In a similar manner to the previous sections coef\/f\/icients in the expansions for the functions and eigenvalues can be computed using the same ansatz, although they would only make sense asymptotically in a shrinking neighbourhood of $z=\frac{\pi}{2}$. However these are also realised by considering the Mathieu functions as a special case of the Lam\'{e} functions.

\subsection*{Summary of results as a special case of Lam\'{e}'s equation}
We obtain rigorous uniform results for Mathieu's equation, using the limiting arguments given in~(\ref{Msoln}) and~(\ref{MEigen1}).
Thus (reader can check the details, we will just summarise) letting $t=2\sqrt{h}\cos{z}$, for $m\geq 0$ and $z=\frac{\pi}{2}+\mathcal{O}(h^{-1/2})$ we have as $h\to\infty$
\begin{gather}\nonumber
\ce_{m}(h,a_{m},z)= C_{m}\left(D_{m}(t)\sum_{s=0}^{n}\frac{A_{s}(t)}{h^s}+D_{m}'(t)\sum_{s=0}^{n}\frac{B_{s}(t)}{h^{s}}+\mathcal{O}\left(h^{-n-1}\right)\right),\\ \label{funcexp}
\frac{\se_{m+1}(h,b_{m+1},z)}{\sin{z}}= S_{m+1}\left(D_{m}(t)\sum_{s=0}^{n}\frac{P_{s}(t)}{h^s}+D_{m}'(t)\sum_{s=0}^{n}\frac{Q_{s}(t)}{h^{s}}+\mathcal{O}\left(h^{-n-1}\right)\right),
\end{gather}
where
\begin{gather*}\left.
\begin{array}{@{}l@{}}
C_{m} \\
S_{m+1}
\end{array}\right\}
\sim \left(\frac{\pi h}{2 (m! )^2}\right)^{1/4}\left(1+\sum_{s=1}^{\infty}\frac{\eta_{s}}{h^s}\right)^{-1/2}.
\end{gather*}
Both $A_{s}(t)$ and $P_{m+1,s}(t)$ are even polynomials, and both  $B_{s}(t)$ and $Q_{m+1,s}(t)$ are odd polynomials. These polynomials are found recursively and we give here the f\/irst few terms:
\begin{gather*}
A_{0}=1, \qquad A_{1}=\frac{t^2}{2^6},\qquad B_{0}=Q_0=0, \qquad B_{1}=Q_1=\frac{t^3}{2^{5}}- (1+2m )\frac{t}{2^{5}},\\ P_{0}=1, \qquad P_{1}=\frac{9t^2}{2^6},\\  A_{2}=\frac{t^8}{2^{13}}- (1+2m ) \frac{t^6}{2^{11}}
+\left(9+10m+10m^2\right)\frac{t^4}{2^{12}}+\left(5+6m-12m^2-8m^3\right)\frac{t^2}{2^{12}}, \\
 B_{2}=\frac{t^5}{2^{8}}
- (1+2m )\frac{5t^3}{2^{11}}-\left(11+20m+20m^2\right)\frac{t}{2^{11}}, \\
 P_{2}=\frac{t^8}{2^{13}}- (1+2m )\frac{t^6}{2^{11}}
+\left(113+10m+10m^2\right)\frac{t^4}{2^{12}}+\left(5+6m-12m^2-8m^3\right)\frac{t^2}{2^{12}}, \\
Q_{2}=\frac{t^5}{2^7}- (1+2m )\frac{13t^3}{2^{11}}-\left(11-20m-20m^2\right)\frac{t}{2^{11}},
\qquad \eta_{1}=\frac{6m+3}{32}.
\end{gather*}

Correspondingly we have the eigenvalue expansion as $h\to\infty$
\begin{gather}\label{eigenexp}\left.
 \begin{array}{@{}l@{}}
a_{m} \\
b_{m+1}
\end{array} \right\}
=-2h^2+4h\sum_{s=0}^{n}\frac{\mu_{s}}{h^{s}}+\mathcal{O}\left(h^{-n}\right),
\end{gather}
where the order term will be dif\/ferent in both cases. The $\mu_{s}$ terms are constant coef\/f\/icients which depend on~$m$, found recursively with the eigenfunction expansions. We give here the f\/irst few terms:
\begin{gather*}
\mu_{0}=m+\ifrac{1}{2}, \qquad \mu_{1} =-\ifrac{1}{16} \big(1+2m+2m^2\big), \qquad \mu_{2}=-\ifrac{1}{128}\left(1+3m+3m^2+2m^3\right).
\end{gather*}

These function and eigenvalue coef\/f\/icients match the formal results given in~\cite[\S~28.8]{NIST:DLMF} and~\cite[\S~2]{MS1954}, and various other papers discussed at the start of this paper. Note that similar results to~(\ref{funcexp}) and~(\ref{eigenexp}) were given in~\cite{Kurz1979}, but there the function expansions did not make sense in the interval they were stated to hold in, and the method used there to obtain the coef\/f\/icients in the expansions for the functions and eigenvalue was very cumbersome. Our methods were simple and enabled us to perform rigorous error analysis on these expansions.

\subsection*{Acknowledgements}

The authors thank the referees for very helpful comments and suggestions for improving the presentation.

\pdfbookmark[1]{References}{ref}
\LastPageEnding

\end{document}